\def\Z{\mathbb{Z}}
\def\e{{\varepsilon}}
\newcommand{\abs}[1]{\vert #1 \vert}
\newcommand{\inv}[1]{\overline{#1}\,}
\newtheorem{theorem}{Theorem}[section]
\newtheorem{lemma}[theorem]{Lemma}
\newtheorem{corollary}[theorem]{Corollary}
\theoremstyle{definition}
\newtheorem*{definition}{Definition}
\newtheorem*{question}{Question}
\theoremstyle{remark}
\newtheorem{remark}{Remark}[section]
\begin{document}

\markboth{Backer Peral and Mellor}{$N$-quandles of spatial graphs}

\title{$N$-quandles of spatial graphs}

\author{Ver\'{o}nica Backer Peral and Blake Mellor}
\thanks{This paper includes results from the first author's senior thesis at LMU}\ \
\address{Loyola Marymount University, 1 LMU Drive, Los Angeles, CA 90045}
\email{blake.mellor@lmu.edu}

\date{}
\maketitle
\begin{abstract} 
The fundamental quandle is a powerful invariant of knots, links and spatial graphs, but it is often difficult to determine whether two quandles are isomorphic. One approach is to look at quotients of the quandle, such as the $n$-quandle defined by Joyce \cite{JO}; in particular, Hoste and Shanahan \cite{HS2} classified the knots and links with finite $n$-quandles. Mellor and Smith \cite{MS} introduced the $N$-quandle of a link as a generalization of Joyce's $n$-quandle, and proposed a classification of the links with finite $N$-quandles. We generalize the $N$-quandle to spatial graphs, and investigate which spatial graphs have finite $N$-quandles. We prove basic results about $N$-quandles for spatial graphs, and conjecture a classification of spatial graphs with finite $N$-quandles, extending the conjecture for links in \cite{MS}.  We verify the conjecture in several cases, and also present a possible counterexample.
\end{abstract}

%\keywords{fundamental quandle, $N$-quandle, $n$-quandle, spatial graph.}

%\ccode{Mathematics Subject Classification 2020: 57K12, 57M15}

%%%%%%%%%%%%%%%%%%%%%%%
%  S:intro
%%%%%%%%%%%%%%%%%%%%%%%
\section{Introduction}\label{S:intro}

The {\it fundamental quandle} of a knot or link was introduced by Joyce \cite{JO2, JO} and, independently, by Matveev \cite{Ma}. The fundamental quandle is a complete invariant of tame knots (up to a change of orientation); unfortunately, classifying quandles is not much easier than classifying knots. One approach is to look at quotients of the fundamental quandle; of particular interest are cases when the quotients are finite, and so may be relatively easily computed and compared.

Joyce \cite{JO2, JO} introduced the $n$-quandle, where every element of the quandle has a finite ``order'' of $n$. Hoste and Shanahan \cite{HS2} proved that for a link $L$ the $n$-quandle $Q_n(L)$ is finite if and only if $L$ is the singular locus (with each component labeled $n$) of a spherical 3-orbifold with underlying space $\mathbb{S}^3$. This result, together with Dunbar's \cite{DU} classification of all geometric, non-hyperbolic 3-orbifolds, allowed them to give a complete list of all  knots and links in $\mathbb{S}^3$ with finite $n$-quandles for some $n$ \cite{HS2}.  Many of these finite $n$-quandles have been described in detail \cite{CHMS, HS1, Me}.

Some orbifolds in Dunbar's paper have a singular locus that is a link with different labels on different components. With this motivation, Mellor and Smith \cite{MS} defined $N$-quandles as a generalization of $n$-quandles, where now elements in different components of the quandle have different ``orders''. They proved that every labeled link appearing as the singular locus of a spherical orbifold with underlying space $\mathbb{S}^3$ in Dunbar's classification has a corresponding finite $N$-quandle, and conjectured that these are the only links with finite $N$-quandles.

However, Dunbar's classification also includes orbifolds whose singular locus is a graph with labels on the edges. Niebrzydowski \cite{Ni} defined fundamental quandles for spatial graphs, and the notion of the $n$-quandle and $N$-quandle are easily extended to this context. So it is natural to again conjecture that a spatial graph has a finite $N$-quandle if and only if it appears in Dunbar's classification. The purpose of this paper is to put forward this conjecture, and to investigate the evidence both for and against it.  In particular, we show that many of the graphs in Dunbar's list do, indeed, have finite $N$-quandles, but also identify a potential counterexample.

In section \ref{S:quandles}, we will review the definitions of quandles and $N$-quandles, and of the fundamental quandle (and $N$-quandle) of a link or spatial graph. We also prove some elementary results about $N$-quandles of spatial graphs. At the end of this section we state our Main Conjecture:\medskip

\noindent {\bf Main Conjecture.} {\em A spatial graph $G$ has a finite $N$-quandle if and only if there is a spherical orbifold with underlying space $\mathbb{S}^3$ whose singular locus is the edge-labeled spatial graph $(H,M)$, where $(G,N)$ divides a graph $(G, N')$ that is homeomorphic to a subgraph of $(H,M)$.} \medskip

Our primary approach to verifying this conjecture for particular spatial graphs is to compute the {\em Cayley graphs} of the associated $N$-quandles. We review the algorithm to compute the Cayley graph of a quandle in section \ref{S:Cayley}. In section \ref{S:exceptional} we will consider specific labeled graphs which appear in Dunbar's classification of of 3-orbifolds; we show that all but one of them (our potential counterexample) has a finite $N$-quandle. In section \ref{S:families} we verify the conjecture for some infinite families of graphs by explicitly computing the size of their $N$-quandles (see Theorem \ref{T:Gkmn} and Corollary \ref{C:Gkm}).  Finally, we will pose some questions for further investigation.

%%%%%%%%%%%%%%%%%%%%%%%
%  S:quandles
%%%%%%%%%%%%%%%%%%%%%%%
\section{Quandles and spatial graphs} \label{S:quandles}

%%%%%%%%%%%%%%%%%%%%%%%
%  SS:quandles
%%%%%%%%%%%%%%%%%%%%%%%
\subsection{Quandles, $n$-quandles and $N$-quandles} \label{SS:quandles}

We begin with a review of the definition of a quandle and its associated $n$-quandles. We refer the reader to \cite{FR}, \cite{JO2}, \cite{JO}, and \cite{WI} for more detailed information.

A {\it  quandle} is a set $Q$ equipped with two binary operations $\rhd$ and $\rhd^{-1}$ that satisfy the following three axioms:
\begin{itemize}
\item[\bf A1.] $x \rhd x =x$ for all $x \in Q$.
\item[\bf A2.] $(x \rhd y) \rhd^{-1} y = x = (x \rhd^{-1} y) \rhd y$ for all $x, y \in Q$.
\item[\bf A3.] $(x \rhd y) \rhd z = (x \rhd z) \rhd (y \rhd z)$ for all $x,y,z \in Q$.
\end{itemize}

Each element $x\in Q$ defines a map $S_x:Q \to Q$ by $S_x(y)=y \rhd x$. The axiom A2 implies that each $S_x$ is a bijection and the axiom A3 implies that each $S_x$ is a quandle homomorphism, and therefore an automorphism. We call $S_x$ the {\it point symmetry at $x$}. The {\em inner automorphism group} of $Q$, Inn$(Q)$, is the group of automorphisms generated by the point symmetries.

It is important to note that the operation $\rhd$ is, in general, not associative. To clarify the distinction between $(x \rhd y) \rhd z$ and $x \rhd (y \rhd z)$, we adopt the exponential notation introduced by Fenn and Rourke in \cite{FR} and denote $x \rhd y$ as $x^y$ and $x \rhd^{-1} y$ as $x^{\bar y}$. With this notation, $x^{yz}$ will be taken to mean $(x^y)^z=(x \rhd y)\rhd z$ whereas $x^{y^z}$ will mean $x\rhd (y \rhd z)$. 

The following useful lemma from \cite{FR} describes how to re-associate a product in a quandle given by a presentation. 

\begin{lemma} \label{leftassoc}
If $a^u$ and $b^v$ are elements of a quandle, then
$$\left(a^u \right)^{\left(b^v \right)}=a^{u \bar v b v} \ \ \ \ \mbox{and}\ \ \ \ \left(a^u \right)^{\overline{\left(b^v \right)}}=a^{u \bar v \bar b v}.$$
\end{lemma}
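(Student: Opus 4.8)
The plan is to observe that the exponential notation just encodes the action of the inner automorphism group, so that re-associating a product reduces to the standard fact that an inner automorphism carries a point symmetry to the point symmetry at the moved point. Concretely: for a word $w$ (a finite product of symbols $\rhd c$ and $\rhd^{-1}c$, $c\in Q$), the map $T_w\colon Q\to Q$ given by $T_w(x)=x^w$ is, by construction, a composite of point symmetries and their inverses; since each $S_c$ is a quandle automorphism (as noted above, using A2 and A3) and automorphisms are closed under composition and inversion, $T_w$ is a quandle automorphism (indeed $T_w\in\mathrm{Inn}(Q)$). From the definition of the notation one reads off $x^{ww'}=(x^w)^{w'}$, hence $T_{ww'}=T_{w'}\circ T_w$, together with $T_{\bar w}=T_w^{-1}$ (axiom A2) and $T_\emptyset=\mathrm{id}$; in particular $T_v(b)=b^v$.

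Next I would record the elementary fact that for \emph{any} quandle automorphism $\psi$ and any $b\in Q$ one has $\psi\circ S_b\circ\psi^{-1}=S_{\psi(b)}$: for all $x$, $\psi(S_b(\psi^{-1}(x)))=\psi(\psi^{-1}(x)\rhd b)=x\rhd\psi(b)$, using only that $\psi$ respects $\rhd$. Applying this with $\psi=T_v$, and using $T_v(b)=b^v$, yields $S_{b^v}=T_v\circ S_b\circ T_v^{-1}$. Evaluating both sides at $a^u$ and unwinding the three maps — $T_v^{-1}(a^u)=(a^u)^{\bar v}=a^{u\bar v}$, then $S_b(a^{u\bar v})=a^{u\bar v b}$, then $T_v(a^{u\bar v b})=a^{u\bar v b v}$ — gives the first formula. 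The second formula comes out identically from $S_{b^v}^{-1}=T_v\circ S_b^{-1}\circ T_v^{-1}$; alternatively, it follows from the first formula together with axiom A2 by applying the first formula to the element $(a^u)^{\overline{(b^v)}}$ and solving, using $\overline{\bar v b v}=\bar v\bar b v$.

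Alternatively, the lemma admits a direct proof by induction on the length of $v$: the base case $v=\emptyset$ is immediate, and the inductive step strips off the last letter of $v$, invoking the re-associated form of A3, namely $x^{(y^c)}=x^{\bar c\,y\,c}$ (which is A3 rewritten via A2), and, when the stripped letter is a $\rhd^{-1}c$, its companion identity $x^{(y^{\bar c})}=x^{c\,y\,\bar c}$ (obtained from A3 by substituting $y\mapsto y^{\bar c}$). I do not expect a genuine conceptual obstacle in either route; the only thing demanding care is the bookkeeping with the overline convention (e.g. $\overline{wz}=\bar z\,\bar w$) and keeping the order of composition in $T_w$ straight. The entire content is that conjugation by an inner automorphism moves point symmetries along with their base points.
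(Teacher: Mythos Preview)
Your argument is correct. The conjugation identity $\psi\circ S_b\circ\psi^{-1}=S_{\psi(b)}$ for quandle automorphisms, specialized to $\psi=T_v$, is exactly the right engine, and your evaluation at $a^u$ is clean; the second formula follows as you say either by inverting or by applying A2. The alternative induction on $\vert v\vert$ you sketch also works, with the base case being the reformulation $x^{(y^c)}=x^{\bar c y c}$ of A3.

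As for comparison: the paper does not actually prove this lemma. It is quoted from Fenn and Rourke \cite{FR} and stated without proof, serving only as a tool for manipulating words in quandle presentations. So there is no ``paper's approach'' to compare against; you have supplied a complete argument where the authors simply cite the literature. If anything, your first proof (via inner automorphisms) is the conceptually cleanest version and matches the standard treatment in \cite{FR}; the inductive alternative is more elementary but less illuminating.
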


Using Lemma~\ref{leftassoc}, elements in a quandle given by a presentation $\langle S \mid R \rangle$ can be represented as equivalence classes of expressions of the form $a^w$ where $a$ is a generator in $S$ and $w$ is a word in the free group on $S$ (with $\bar x$ representing the inverse of $x$).

If $n$ is a natural number, a quandle $Q$ is an {\em $n$-quandle} if $x^{y^n} =x$ for all $x,y \in Q$, where by $y^n$ we mean $y$ repeated $n$ times. Given a presentation $\langle S \,|\, R\rangle$ of $Q$, a presentation of  the quotient $n$-quandle $Q_n$ is obtained by adding the relations $x^{y^n}=x$ for every pair of distinct generators $x$ and $y$. 

The action of the inner automorphism group Inn$(Q)$ on the quandle $Q$ decomposes the quandle into disjoint orbits. These orbits are the {\em components} (or {\em algebraic components}) of the quandle $Q$; a quandle is {\em connected} if it has only one component. We generalize the notion of an $n$-quandle by picking a different $n$ for each component of the quandle.

\begin{definition} \label{D:Nquandle}
Given a quandle $Q$ with $k$ ordered components, labeled from 1 to $k$, and a $k$-tuple of natural numbers $N = (n_1, \dots, n_k)$, we say $Q$ is an {\em $N$-quandle} if $x^{y^{n_i}} = x$ whenever $x \in Q$ and $y$ is in the $i$th component of $Q$. 
\end{definition}

Note that the ordering of the components in an $N$-quandle is very important; the relations depend intrinsically on knowing which component is associated with which number $n_i$. 

Given a presentation $\langle S \,|\, R\rangle$ of $Q$, a presentation of  the quotient $N$-quandle $Q_N$ is obtained by adding the relations $x^{y^{n_i}}=x$ for every pair of distinct generators $x$ and $y$, where $y$ is in the $i$th component of $Q$. An $n$-quandle is then the special case of an $N$-quandle where $n_i = n$ for every $i$. 

%%%%%%%%%%%%%%%%%%%%%%%
%  SS:fundamental
%%%%%%%%%%%%%%%%%%%%%%%
\subsection{Fundamental quandles of links and spatial graphs} \label{SS: fundamental}

If $G$ is an oriented knot, link or spatial graph in $\mathbb{S}^3$, then a presentation of its fundamental quandle, $Q(G)$, can be derived from a regular diagram $D$ of $G$ by a process similar to the Wirtinger algorithm (this was described for links by Joyce \cite{JO}, and extended to spatial graphs by Niebrzydowski \cite{Ni}). We assign a quandle generator $x_1, x_2, \dots , x_n$ to each arc of $D$, then introduce relations at each crossing and (for spatial graphs) vertex. At a crossing, we introduce the relation $x_i=x_k^{ x_j}$ as shown on the left in Figure~\ref{relations}. At a vertex with incident edges $a_1, a_2, \dots a_n$, as shown on the right in Figure~\ref{relations}, we introduce the relation $((x\rhd^{\e_1} a_1)\rhd^{\e_2} a_2) \cdots \rhd^{\e_n} a_n = x$ (where $\e_i = 1$ if $a_i$ is directed into the vertex, and $\e_i = -1$ if $a_i$ is directed out from the vertex). It is easy to check that the Reidemeister moves for spatial graphs do not change the quandle given by this presentation so that the quandle is indeed an invariant of the oriented spatial graph (or link).

\begin{figure}[h]
$$\includegraphics[height=1in]{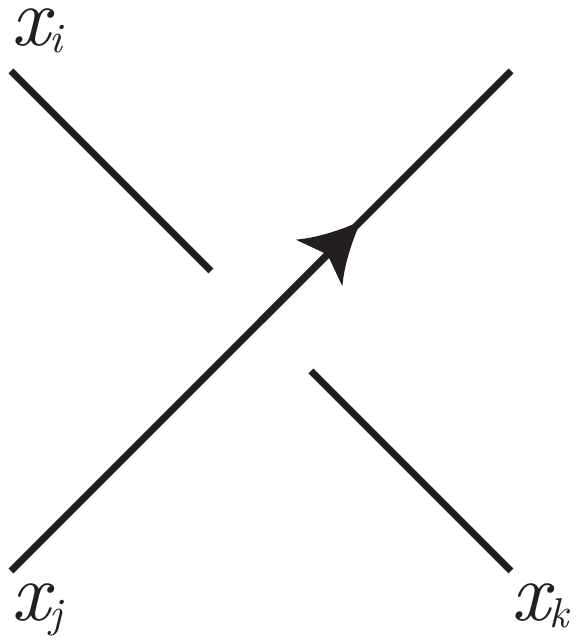} \qquad \qquad \includegraphics[height = 1in]{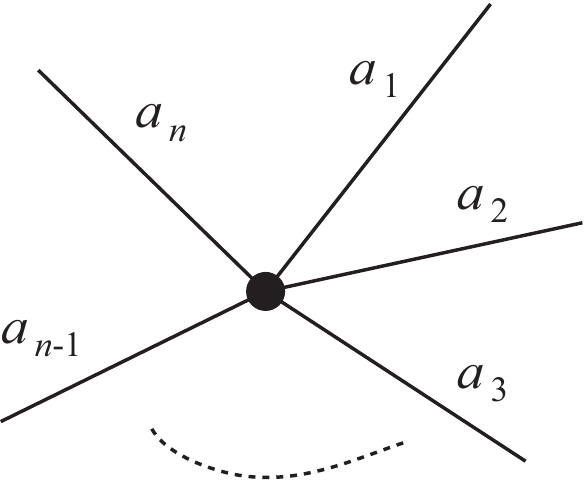}$$
$$\qquad\qquad \qquad x_i=x_k^{x_j} \qquad \qquad ((x\rhd^{\e_1} a_1)\rhd^{\e_2} a_2) \cdots \rhd^{\e_n} a_n = x$$
\caption{The fundamental quandle relations at a crossing and at a vertex.}
\label{relations}
\end{figure}

If $n$ is a natural number, we can take the quotient $Q_n(G)$ of the fundamental quandle $Q(G)$ to obtain the fundamental $n$-quandle of a spatial graph. Hoste and Shanahan \cite{HS2} classified all pairs $(L,n)$ for which $Q_n(L)$ is finite, where $L$ is a link.

Fenn and Rourke \cite{FR} observed that for a link $L$, the components of the quandle $Q(L)$ are in bijective correspondence with the components of the link $L$, with each component of the quandle containing the generators of the Wirtinger presentation associated to the corresponding link component. Similarly, for a spatial graph $G$, the components of the quandle $Q(G)$ correspond to the {\bf edges} of the graph $G$. This is because two distinct generators of the quandle (from the Wirtinger presentation) are in the same component if and only if the corresponding arcs of the diagram are separated by a sequence of crossings; hence they must lie on the same edge.

So if we have a graph $G$ with $k$ edges, and label each edge $e_i$ with a natural number $n_i$, we can let $N = (n_1, \dots, n_k)$ and take the quotient $Q_N(G)$ of the fundamental quandle $Q(G)$ to obtain the fundamental $N$-quandle of the graph (this depends on the ordering of the edges). If $Q(G)$ has the Wirtinger presentation from a diagram $D$, then we obtain a presentation for $Q_N(G)$ by adding relations $x^{y^{n_i}} = x$ for each pair of distinct generators $x$ and $y$ where $y$ corresponds to an arc of edge $e_i$ in the diagram $D$.

\begin{remark} \label{R:Nquandle}
It is worth observing that if $x_i^y = x_i$ for every {\em generator} $x_i$ of a quandle, then $x^y = x$ for every {\em element} $x$ of the quandle.  Say $x = x_1^{x_2x_3\dots x_m}$, where each $x_i$ is a generator.  Then
$$x^y = x_1^{x_2x_3\cdots x_m y} = x_1^{y(\bar{y}x_2 y)(\bar{y} x_3 y) \cdots (\bar{y} x_m y)} = (x_1^y)^{(x_2^y)(x_3^y) \cdots (x_m^y)} = x_1^{x_2x_3\dots x_m} = x.$$
We will use this fact when constructing Cayley graphs for $N$-quandles.
\end{remark}

%%%%%%%%%%%%%%%%%%%%%%%
%  SS:properties
%%%%%%%%%%%%%%%%%%%%%%%
\subsection{Properties of $N$-quandles} \label{SS:properties}

In this section, we will make some observations about $N$-quandles, particularly for spatial graphs. Given two $k$-tuples $N = (n_1, \dots, n_k)$ and $M = (m_1, \dots, m_k)$, we say that $M$ {\em divides} $N$ (or $M \vert N$) if $m_i \vert n_i$ for each $i$. If $G$ is a spatial graph with $k$ edges, we will also say the labeled graph $(G, M)$ divides the labeled graph $(G, N)$.

\begin{lemma} \label{L:divide}
If $G$ is a spatial graph with $k$ edges (or a link with $k$ components), and $N$ and $M$ are $k$-tuples with $M \vert N$, then $\vert Q_M(G)\vert \leq \vert Q_N(G) \vert$. In particular, if $Q_N(G)$ is finite, so is $Q_M(G)$.
\end{lemma}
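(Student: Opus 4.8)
The plan is to work with the quandle presentations directly and to identify which of $Q_M(G)$ and $Q_N(G)$ is a quotient of the other. Both are quotients of the fundamental quandle $Q(G)$: starting from a Wirtinger presentation $\langle S \mid R\rangle$ of $Q(G)$, we obtain $Q_N(G)$ by adjoining the relations $x^{y^{n_i}}=x$ for all generators $x$ and all generators $y$ corresponding to edge $e_i$, and we obtain $Q_M(G)$ by instead adjoining $x^{y^{m_i}}=x$. The key observation is that, because $m_i \mid n_i$, the $M$-relations force the $N$-relations to hold, so it is $Q_M(G)$ that is a quotient of $Q_N(G)$ — which is the direction that yields the stated inequality.

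To make this precise I would first verify the relevant implication at the level of relations. Fix an edge $e_i$ and a generator $y$ on it, and suppose $x^{y^{m_i}}=x$. Writing $n_i = q\, m_i$ and iterating gives
$$x^{y^{n_i}} = x^{y^{q m_i}} = \left(\cdots\left(x^{y^{m_i}}\right)^{y^{m_i}}\cdots\right)^{y^{m_i}} = x,$$
since each application of $y^{m_i}$ returns the element to itself. Thus every defining $M$-relation of $Q_M(G)$ entails the corresponding defining $N$-relation. By Remark~\ref{R:Nquandle}, the identity $x^{y^{m_i}}=x$ holding on generators propagates to all elements, so $Q_M(G)$ in fact satisfies $x^{y^{n_i}}=x$ for every element $x$ and every $y$ in the $i$th component; in other words, $Q_M(G)$ is itself an $N$-quandle.

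It then follows that the canonical surjection $\pi_M\colon Q(G)\twoheadrightarrow Q_M(G)$ sends each $N$-relation to an identity, so $\pi_M$ factors through $Q_N(G)$, giving a quandle homomorphism $\phi\colon Q_N(G)\to Q_M(G)$ with $\phi\circ\pi_N=\pi_M$. Since $\pi_M$ is onto, $\phi$ is onto, and therefore $\vert Q_M(G)\vert \le \vert Q_N(G)\vert$; in particular, if $Q_N(G)$ is finite then its quotient $Q_M(G)$ is finite as well. There is no serious obstacle here: the content of the lemma is entirely in getting the direction of the quotient map correct. The only point requiring a little care is justifying that the $M$-relation, imposed only on generators, may be iterated and extended to all elements — exactly what the computation above together with Remark~\ref{R:Nquandle} supplies.
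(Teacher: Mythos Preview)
Your proof is correct and follows essentially the same approach as the paper: both argue that since $m_i\mid n_i$, the $N$-relations $x^{y^{n_i}}=x$ are consequences of the $M$-relations $x^{y^{m_i}}=x$, so $Q_M(G)$ is a quotient of $Q_N(G)$ and hence no larger. Your version is simply more explicit about the iteration $n_i=qm_i$ and the factoring of the canonical surjection, whereas the paper compresses this into a couple of sentences.
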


\begin{proof}
Since the graph is the same, $Q_M(G)$ and $Q_N(G)$ have the same crossing and vertex relations, and the same number of components. The only difference is that, if $y$ is in the $i$th component, then in $Q_M(G)$ we have $x^{y^{m_i}} = x$ (for any element $x$), and in $Q_N(G)$ we have $x^{y^{n_i}} = x$.  Since $m_i \vert n_i$, this means the relation $x^{y^{n_i}} = x$ holds in both quandles.  So every relation in $Q_N(G)$ holds in $Q_M(G)$, which means $Q_M(G)$ is a quotient of $Q_N(G)$, and hence smaller (or the same cardinality).
\end{proof}

\begin{lemma} \label{L:delete}
Let $G$ be a spatial graph with $k$ edges $e_1, \dots, e_k$ (or a link with $k$ components).  Let $G_i = G - e_i$ (the result of deleting edge (or component) $e_i$). If $N = (n_1, \dots, n_k)$, let $N_i = (n_1, \dots, n_{i-1}, n_{i+1}, \dots, n_k)$. Also let $C_i$ be the component of $Q_N(G)$ corresponding to edge $e_i$. Then $\vert Q_{N_i}(G_i) \vert \leq \vert Q_N(G) - C_i\vert$.  So if $Q_N(G)$ is finite, so is $Q_{N_i}(G_i)$. In particular, if $n_i = 1$, then $\vert Q_{N_i}(G_i) \vert = \vert Q_N(G) - C_i\vert$.
\end{lemma}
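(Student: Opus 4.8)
The plan is to construct a quandle surjection from $Q_N(G) - C_i$ onto $Q_{N_i}(G_i)$, which immediately gives the inequality $|Q_{N_i}(G_i)| \leq |Q_N(G) - C_i|$. To set this up, I would start from a diagram $D$ of $G$ and the associated Wirtinger-style presentation of $Q(G)$, with generators for the arcs and crossing/vertex relations. Deleting the edge $e_i$ from $G$ produces a diagram $D_i$ of $G_i$, whose presentation is obtained from that of $D$ by removing the generators corresponding to arcs of $e_i$ and removing every relation in which such a generator appears — these are exactly the crossing relations at crossings involving $e_i$ and the vertex relations at endpoints of $e_i$ (where, after deletion, the vertex relation changes or the vertex may disappear). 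Passing to the $N$-quandle and $N_i$-quandle respectively, the presentation of $Q_{N_i}(G_i)$ is likewise a sub-presentation of that of $Q_N(G)$: same generators except those of $e_i$, a subset of the relations, and the $n_j$-power relations for $j \neq i$ (which are present in $Q_N(G)$ as well, noting Remark~\ref{R:Nquandle} lets us impose these on generators only).

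The map $\varphi$ is then defined on generators by sending each generator of $Q_N(G)$ coming from an arc not on $e_i$ to the corresponding generator of $Q_{N_i}(G_i)$. The key step is to check $\varphi$ is well-defined: every defining relation of $Q_N(G)$ that does not involve a generator of $e_i$ is a defining relation of $Q_{N_i}(G_i)$, so it holds in the target; and every relation that does involve a generator of $e_i$ is simply discarded — we must make sure we are only asking $\varphi$ to respect relations among the retained generators, which is automatic since $\varphi$ is only defined on those. However, $\varphi$ as described is a map on generators of a presentation, and to get a quandle homomorphism I should instead argue that $Q_{N_i}(G_i)$ satisfies the presentation obtained from $Q_N(G)$ by adding the relations ``(generator of $e_i$) $=$ (some fixed retained generator)'' — i.e.\ $Q_{N_i}(G_i)$ is a quotient of $Q_N(G)$, hence every element of $Q_N(G)$ maps to $Q_{N_i}(G_i)$, and the elements of $C_i$ are exactly the ones that may collapse together or merge into other components. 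Since $\varphi$ is surjective (the retained generators generate $Q_{N_i}(G_i)$) and its restriction to $Q_N(G) - C_i$ is still surjective (every generator of $Q_{N_i}(G_i)$ is the image of a retained generator, which lies outside $C_i$ by the edge-to-component correspondence recalled from Fenn–Rourke), we get $|Q_{N_i}(G_i)| \leq |Q_N(G) - C_i|$, and finiteness descends.

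For the final claim, suppose $n_i = 1$. Then in $Q_N(G)$ the relation $x^{y} = x$ holds for every $y \in C_i$ and every element $x$, which by axiom A2 forces $x^{\bar y} = x$ as well; that is, every point symmetry $S_y$ with $y \in C_i$ is the identity. Consequently every crossing relation involving an arc of $e_i$ becomes trivial ($x_k^{x_j} = x_k$ reads as an identity), and every vertex relation at an endpoint of $e_i$ loses its $e_i$-term. This shows the presentation of $Q_N(G)$ becomes, after the obvious simplifications, precisely the presentation of $Q_{N_i}(G_i)$ together with the generators of $C_i$, each of which is determined (via the now-trivial crossing relations along $e_i$) by a single retained generator. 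Hence $\varphi$ restricted to $Q_N(G) - C_i$ is a bijection and $|Q_{N_i}(G_i)| = |Q_N(G) - C_i|$.

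The main obstacle I anticipate is bookkeeping the vertex relations carefully: deleting $e_i$ changes the relation at each of its endpoints (and may leave a vertex of valence two, or of valence zero, or disconnect the graph), so I need to confirm in each case that the resulting vertex relation in the presentation of $G_i$ is exactly what one gets by striking the $e_i$-term from the $G$ vertex relation, and that no retained relation secretly depends on an $e_i$-generator in a way that is not visible from the diagram. Handling a loop edge $e_i$ (both endpoints the same vertex) and the possibility that $G_i$ becomes disconnected are the edge cases to treat explicitly.
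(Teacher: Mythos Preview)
Your overall strategy---construct a surjection from $Q_N(G)-C_i$ onto $Q_{N_i}(G_i)$---is exactly right, and is what the paper does. But the specific map you propose does not work. You suggest making $Q_{N_i}(G_i)$ a quotient of $Q_N(G)$ by adding relations ``(generator of $e_i$) $=$ $z$'' for some fixed retained generator $z$. Consider a crossing where an arc $y$ of $e_i$ is the overstrand, with crossing relation $x_j^{y}=x_j'$. Under your identification this becomes $x_j^{z}=x_j'$. But in $G_i$ that crossing is gone and $x_j$, $x_j'$ are the \emph{same} arc, so the relation in $Q_{N_i}(G_i)$ is $x_j=x_j'$. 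There is no reason $x_j^{z}=x_j$ should hold in $Q_{N_i}(G_i)$, so the images of the two presentations do not match and your $\varphi$ is not well-defined. (Similarly, the vertex relation at an endpoint of $e_i$ would pick up spurious $z$-terms rather than simply dropping the $e_i$-term.)

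The paper's fix is different: rather than identifying $e_i$-generators with something, one \emph{kills their action}, adding the relations $x^{y}=x$ for every element $x$ and every $e_i$-generator $y$. Then $x_j^{y}=x_j'$ collapses to $x_j=x_j'$, the vertex relations lose their $e_i$-terms, and the resulting presentation on the non-$e_i$-generators is exactly that of $Q_{N_i}(G_i)$. Since adding relations can only shrink the quandle and the retained generators all lie outside $C_i$, this gives the surjection $Q_N(G)-C_i\twoheadrightarrow Q_{N_i}(G_i)$ you wanted. For the $n_i=1$ case, your instinct is right: the relations $x^{y}=x$ are already present, so nothing collapses and the surjection is a bijection. (Your phrase ``each of which is determined \dots\ by a single retained generator'' is not what happens---the $e_i$-generators form $C_i$ and are simply removed, not merged into retained generators.)
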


\begin{proof} 
To obtain $Q_{N_i}(G_i)$ from $Q_N(G)$, you simply remove the component $C_i$, and then add the relations $x^y = x$ for all generators $x$ and all generators $y$ corresponding to arcs along $e_i$. Since we are adding relations, the quandle cannot get any larger, so $\vert Q_{N_i}(G_i) \vert \leq \vert Q_N(G) - C_i\vert$. In the case when $n_i = 1$, the relations $x^y = x$ were already present, so the only change is removing the component $C_i$.
\end{proof}

\begin{lemma} \label{L:vertex}
Consider a graph $G$ with edges $e_1, \dots, e_k$ and an edge labeling $N = (n_1, \dots, n_k)$. Let $G^i$ be the result of adding a vertex $v$ of degree 2 to $e_i$, splitting it into edges $f$ and $g$, and give both $f$ and $g$ the same label as $e_i$.  So $G^i$ has edge labeling $N^i = (n_1, \dots, n_i, n_i, \dots, n_k)$. Let $C_i$ be the component of $Q_N(G)$ corresponding to edge $e_i$, and let $C_i'$ be an isomorphic copy of $C_i$.  Then $Q_{N^i}(G^i) = Q_N(G) \cup C_i'$. In particular, if one of $Q_{N^i}(G^i)$ or $Q_N(G)$ is finite, so is the other.
\end{lemma}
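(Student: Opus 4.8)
The plan is to exhibit a presentation of $Q_{N^i}(G^i)$ that agrees with a presentation of $Q_N(G)$ except that one crossing relation is weakened, and then to build mutually inverse maps between $Q_N(G)$ and a subquandle of $Q_{N^i}(G^i)$. First, fix a diagram $D$ of $G$ and, after a Reidemeister~I move if necessary, assume $e_i$ has consecutive arcs $b_1$ and $b_2$ meeting at an undercrossing with over-arc $c \neq b_1$, where $b_1$ is the first arc of $e_i$, running from a vertex $w_0$ of $G$; write the corresponding crossing relation as $b_2 = b_1^{\,c}$. Place the degree-$2$ vertex $v$ in $b_1$, after every crossing at which $b_1$ is the over-strand and before the $b_1$/$b_2$ undercrossing; this splits $b_1$ into a long piece, which I relabel $b_1$, and a short new arc $b''$ that is the over-strand of no crossing. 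Reading the Wirtinger presentation of $Q_{N^i}(G^i)$ off this diagram, and using Remark~\ref{R:Nquandle} to impose the $N^i$-relations on generators only, yields the presentation $\langle S \mid R\rangle$ of $Q = Q_N(G)$ with three changes: $b_2 = b_1^{\,c}$ is replaced by $b_2 = (b'')^{c}$; the vertex relation $x^{b_1} = x^{b''}$ (all $x$) is added; and the extra relation $x^{(b'')^{n_i}} = x$ is added. Eliminating $b''$ by the Tietze substitution $b'' = b_2 \rhd^{-1} c$ and re-associating with Lemma~\ref{leftassoc}, the first new relation becomes trivial, the last reduces to the $N^i$-relation $x^{(b_2)^{n_i}} = x$ already present, and the vertex relation becomes $x^{b_1} = x^{c\, b_2\, \bar c}$ (all $x$). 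Thus
$$Q_{N^i}(G^i) \;=\; \bigl\langle\, S \ \bigm|\ R^{*} \cup \{\, x^{b_1} = x^{c\, b_2\, \bar c} \text{ for all } x\,\}\,\bigr\rangle, \qquad R^{*} = R \setminus \{\, b_2 = b_1^{\,c}\,\};$$
that is, $Q_{N^i}(G^i)$ is $Q$ with the single crossing relation $b_2 = b_1^{\,c}$ weakened to the point-symmetry relation $S_{b_1} = S_{b_2 \rhd^{-1} c}$.

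Since $b_2 = b_1^{\,c}$ forces $b_2 \rhd^{-1} c = b_1$, the identity on $S$ induces a surjection $\phi\colon Q_{N^i}(G^i) \to Q$. Going the other way, define $\psi\colon Q \to Q_{N^i}(G^i)$ by $\psi(b_1) = b_2 \rhd^{-1} c$ and $\psi(s) = s$ for every other generator $s$. To see $\psi$ is well-defined, examine each relation of $Q$ mentioning $b_1$: the crossing relation $b_2 = b_1^{\,c}$ goes to the triviality $b_2 = (b_2 \rhd^{-1} c)^{c}$; the $N$-relation $x^{(b_1)^{n_i}} = x$ goes, after re-associating, to $S_{b_2}^{\,n_i} = \mathrm{id}$, which is an $N^i$-relation; and in every remaining relation of $Q$ that involves $b_1$ — the over-crossing relations having $b_1$ as the over-arc, and the vertex relation at $w_0$ — the generator $b_1$ enters only through its point symmetry $S_{b_1}$, which $\psi$ sends to $S_{b_2\rhd^{-1}c}$, and $S_{b_1} = S_{b_2\rhd^{-1}c}$ holds in $Q_{N^i}(G^i)$ while $b_1$ is precisely the over-arc of those crossings in the new diagram. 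Hence $\psi$ is a homomorphism and $\phi\psi = \mathrm{id}_Q$, so $\psi$ is injective.

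Finally, read off the decomposition. Rewriting every occurrence of $b_1$ in an exponent word as the $b_1$-free word $c\, b_2\, \bar c$ (legitimate by the weakened relation), one checks that the subquandle of $Q_{N^i}(G^i)$ generated by $S \setminus \{b_1\}$ is the union of all components other than the component $C_f$ containing $b_1$, and that this subquandle equals $\psi(Q)$; since $\psi$ is an isomorphism onto its image and $\psi(b_1)$ lies in the component of $b_2$, $\psi$ carries the component $C_i$ of $Q$ isomorphically onto that component. Therefore $Q_{N^i}(G^i) = \psi(Q) \sqcup C_f \cong Q \sqcup C_f$. Running the same construction with $v$ placed near the start of the last arc of $e_i$ shows symmetrically that $C_f \cong C_i$, so $Q_{N^i}(G^i) \cong Q_N(G) \sqcup C_i'$ with $C_i'$ an isomorphic copy of $C_i$, and the finiteness statement is immediate. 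The main obstacle I anticipate is the verification that $\psi$ is well-defined — in particular, arranging the diagram so that at the lone crossing where the two presentations differ, $b_1$ survives in the remaining relations only via $S_{b_1}$ — together with the secondary point of establishing, via the symmetric argument, that the extra component is genuinely a copy of $C_i$.
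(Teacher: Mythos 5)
Your proof is correct and follows essentially the same strategy as the paper's: both arguments observe that the Wirtinger presentation changes only by adding one generator whose point symmetry is identified with an existing word (so the quandle gains exactly one new component), and both identify that extra component with $C_i$ by exploiting the freedom to place the degree-2 vertex anywhere along the edge. Your version is more detailed --- the explicit Tietze elimination of the short arc $b''$ and the section/retraction pair $(\psi,\phi)$ make rigorous the paper's briefer assertion that the two presentations ``have exactly the same quandle relations'' up to renaming.
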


\begin{proof}
Suppose $v$ is added to arc $a$, and splits it into arcs $b$ and $c$, with orientations induced by the orientation on $a$.  The vertex relation at $v$ is $x^{b\bar{c}} = x$, or $x^b = x^c$, where $x$ is any element of the quandle. Any relation of $Q_N(G)$ has a corresponding relation in $Q_{N^i}(G^i)$, with any occurrence of $a$ replaced by $b$ or $c$.  But since $x^b = x^c$, we may assume $a$ is simply replaced by $b$ everywhere. So then $Q_N(G)$ and $Q_{N^i}(G^i)$ have exactly the same quandle relations (up to replacing $a$ by $b$); the only difference is that $Q_{N^i}(G^i)$ has an extra component corresponding to the extra edge.  However, since the vertex $v$ may be placed anywhere along edge $e_i$ without changing the graph topologically, the components $C_i$ and $C_i'$ corresponding to the edges $f$ and $g$ can be exchanged by an automorphism of the graph.  Hence, these components must be isomorphic, completing the proof.
\end{proof}

We will say that edge-labeled spatial graphs $(G, N)$ and $(H, M)$ are {\em homeomorphic} if one can be obtained from the other by adding and/or removing vertices of degree 2, modifying the labelings at each step as in Lemma \ref{L:vertex}. With these observations, we can state our main conjecture. \medskip

\noindent {\bf Main Conjecture.} {\em A spatial graph $G$ has a finite $N$-quandle if and only if there is a spherical orbifold with underlying space $\mathbb{S}^3$ whose singular locus is the edge-labeled spatial graph $(H,M)$, where $(G,N)$ divides a graph $(G, N')$ that is homeomorphic to a subgraph of $(H,M)$.} \medskip

%%%%%%%%%%%%%%%%%%%%%%%%%
%  S:Cayley
%%%%%%%%%%%%%%%%%%%%%%%%%
\section{Computing Cayley graphs} \label{S:Cayley}

Given a presentation of a quandle, one can try to systematically enumerate its elements and simultaneously produce a Cayley graph of the quandle. This is our primary means of proving that a quandle is finite. Such a method was described in a graph-theoretic fashion by Winker in \cite{WI}. The method is similar to the well-known Todd-Coxeter process for enumerating cosets of a subgroup of a group \cite{TC} and has been extended to racks by Hoste and Shanahan \cite{HS3}. (A rack is more general than a quandle, requiring only axioms A2 and A3.) We provide a brief description of Winker's method applied to the $N$-quandle of a spatial graph (or link).  Suppose $G$ is a labeled spatial graph diagram with $c$ crossings and $v$ vertices, and $Q_N(G)$ is presented as
$$Q_N(G)=\left\langle x_1, x_2, \dots, x_g \, \left\vert \, \left\{x_{j_i}^{w_i}=x_{k_i} \right\}_{i = 1}^c, \left\{x^{u_i} = x\right\}_{i = 1}^v, \left\{x^{x_i^{n_i}} = x\right\}_{i = 1}^g \right.\right\rangle,$$
where each $w_i$ and $u_i$ is a word in $\{x_1,\dots , x_g, \overline{x_i}, \dots, \overline{x_g}\}$ (representing the crossing and vertex relations, respectively), and $n_i$ is the label on the quandle component containing $x_i$. As noted in Remark \ref{R:Nquandle}, for any word $w$, we use $x^w = x$ as shorthand for the set of relations $\left\{x_i^{w}= x_i\right\}_{i = 1}^g$; $x$ may then be understood to be any element of the quandle. 

If $y$ is any element of the quandle, then it follows from the relation $x_{j_i}^{w_i}=x_{k_i}$ and Lemma~\ref{leftassoc} that $y^{\overline{w}_i x_{j_i}w_i}=y^{x_{k_i}}$, and so
$$y^{\overline{w}_i x_{j_i} w_i \overline{x}_{k_i}}=y \text{ for all } y \text{ in } Q_N(G).$$

Winker calls this relation the {\it secondary relation}  associated to the {\it primary relation} $x_{j_i}^{w_i}=x_{k_i}$. We also consider relations of the form $y^{u_i} = y$, for $1 \le i \le v$ and $y^{ x_i^{n_i} }=y$ for all $y$ and $1 \le i \le g$ to be secondary relations (since they apply to all elements of the quandle).

Winker's method now proceeds to build the Cayley graph associated to the presentation as follows:
  
\begin{enumerate}
\item Begin with $g$ vertices labeled $x_1,x_2, \dots, x_g$ and numbered $1,2, \dots,g$. 
\item Add an oriented loop at each vertex $x_i$ and label it $x_i$. (This encodes the axiom A1.)
\item For each value of $i$ from $1$ to $r$, {\em trace} the primary relation $x_{j_i}^{w_i}=x_{k_i}$ by introducing new vertices and oriented edges as necessary to create an oriented path from $x_{j_i}$ to $x_{k_i}$ given by $w_i$. Consecutively number (starting from $g+1$) new vertices in the order they are introduced.  Edges are labelled with their corresponding generator and oriented to indicate whether $x_i$ or $\overline x_i$ was traversed. 
\item Tracing a relation may introduce edges with the same label and same orientation into or out of a shared vertex. We identify all such edges, possibly leading to other identifications. This process is called {\it collapsing} and all collapsing is carried out before tracing the next relation. 
\item Proceeding in order through the vertices, trace and collapse each secondary relation (in order). All of these relations are traced and collapsed at a vertex before proceeding to the next vertex.
\end{enumerate}

The method will terminate in a finite graph if and only if the $N$-quandle is finite. The reader is referred to Winker~\cite{WI} and Hoste and Shanahan \cite{HS3}  for more details. Code implementing the algorithm in {\em Mathematica} and {\em Python} is available on the author's webpage \cite{Me2}, and was used to do the calculations in section \ref{S:exceptional}.

%%%%%%%%%%%%%%%%%%%%%%%
%  S:exceptional
%%%%%%%%%%%%%%%%%%%%%%%
\section{Exceptional graphs} \label{S:exceptional}

Dunbar \cite{DU} classifies 3-dimensional orbifolds into several types.  The spherical orbifolds are either of type 2, meaning that they are Seifert fibered orbifolds with a 2-orbifold base, or type 4, meaning they do not fiber over 2-orbifolds. There are several infinite families of spherical orbifolds of type 2, but only 18 of type 4, all of which have a graph (rather than a link) as the singular locus. In this section, we will consider these 18 exceptional (labeled) graphs. We will also include other labelings on these graphs that divide the ones given in Dunbar (though these fall into some of families of type 2 orbifolds). The results in this section were found by directly computing the Cayley graphs of the relevant quandles, as described in Section \ref{S:Cayley}.

\begin{figure}[htbp]
\begin{center}
{$
\begin{array}{ccc}
\includegraphics[height=1in,trim=0 0 0 0,clip]{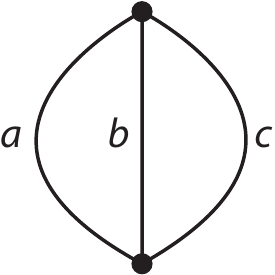} & & \includegraphics[height=1in,trim=0 0 0 0,clip]{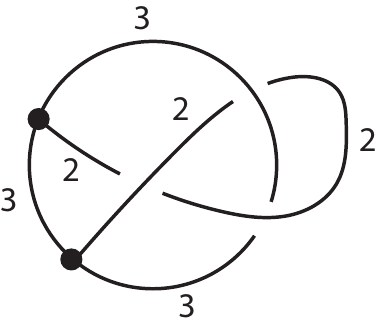} \\
\text{Theta graph } \theta_3 & & \text{Knotted theta graph } KT \\
&& \\
\end{array}
$}
{$
\begin{array}{ccc}
\includegraphics[height=.8in,trim=0 0 0 0,clip]{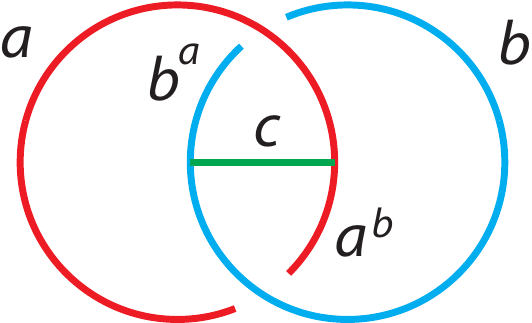} & \includegraphics[height=1in,trim=0 0 0 0,clip]{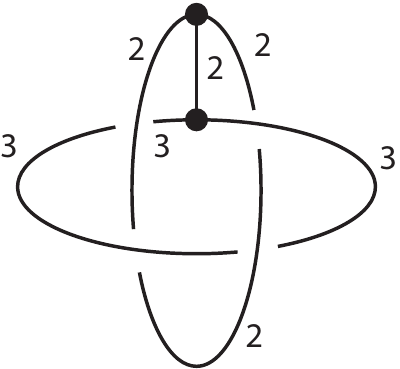} & \includegraphics[height=.8in,trim=0 0 0 0,clip]{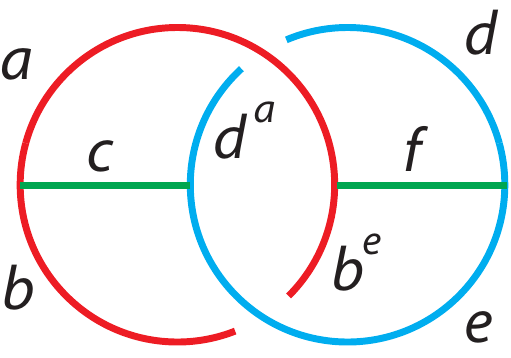} \\
\text{Hopf Handcuff graph } H_1 & \text{2-linked Handcuff graph } H_2 & \text{Double Handcuff graph } DH \\
&& \\
\end{array}
$}
{$
\begin{array}{ccc}
\includegraphics[height=1in,trim=0 0 0 0,clip]{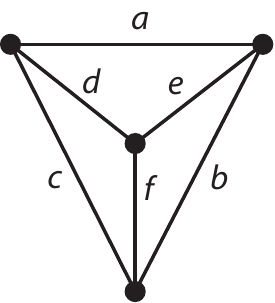} & & \includegraphics[height=1in,trim=0 0 0 0,clip]{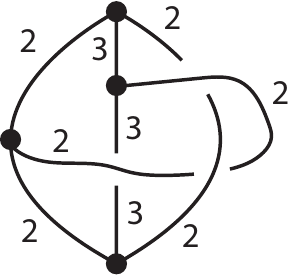} \\
\text{Planar } K_4 & & \text{Knotted } K_4 
\end{array}
$}
\end{center}
\caption{The exceptional graphs.}
\label{F:exceptional}
\end{figure}

Figure~\ref{F:exceptional} shows the exceptional graphs, with the edges labeled in alphabetical order, and with a presentation for the fundamental quandle (given the choice of orientations shown).  To simplify the presentations, we have reduced them to just use one generator for each edge, so we do not need to include the crossing relations.  Moreover, in some cases, the vertex relations are redundant, so there are fewer relations than vertices.  Table~\ref{T:exceptional} lists all the labelings of these graphs shown in Dunbar (the order of the labels corresponds to the alphabetical order of the edges in Figure \ref{F:exceptional}), and the size of the corresponding $N$-quandle. Labelings that do not correspond to an orbifold of type 4 (i.e. not shown in Table 8 of \cite{DU}) are marked with an asterisk.

\begin{table}[tbp]
\begin{center}
\begin{tabular}{|c|c|c|}
\hline
Graph $G$ & $N$ & $\vert Q_N(G)\vert$ \\ \hline
\multirow{5}{*}{$\theta_3$} & (2,2,2)* & 6 \\ \cline{2-3}
& (3,2,2)* & 8 \\ \cline{2-3}
& (3,3,2) & 14 \\ \cline{2-3}
& (4,3,2) & 26 \\ \cline{2-3}
& (5,3,2) & 62 \\ \hline
$KT$ & (3,3,2) & 1680 \\ \hline
\multirow{2}{*}{$H_1$} & (3,2,2) & 32 \\ \cline{2-3}
& (3,3,2) & 336 \\ \hline
$H_2$ & (3,2,2) & 768 \\ \hline
\multirow{3}{*}{$DH$} & (2,2,2,3,2,2)* & 102 \\ \cline{2-3}
& (2,2,3,3,2,2) & 320 \\ \cline{2-3}
& (2,2,2,3,2,4) & 2976 \\ \hline
\multirow{9}{*}{Planar $K_4$} & (3,2,2,2,2,2)* & 34 \\ \cline{2-3}
& (3,3,2,2,2,2) & 64 \\ \cline{2-3} 
& (3,4,2,2,2,2) & 124 \\ \cline{2-3}
& (3,5,2,2,2,2) & 304 \\ \cline{2-3}
& (3,3,3,2,2,2) & 240 \\ \cline{2-3}
& (3,3,2,2,2,3) & 150 \\ \cline{2-3}
& (3,4,2,2,2,3) & 1392 \\ \cline{2-3}
& (3,3,2,2,2,4) & 464 \\ \cline{2-3}
& (3,3,2,2,2,5) & 17,040 \\ \hline
Knotted $K_4$ & (3,3,2,2,2,2) & unknown \\ \hline
\end{tabular}
\end{center}
\caption{Size of finite $N$-quandles of exceptional graphs. Labelings that do not correspond to an orbifold of type 4 are marked with an asterisk.}
\label{T:exceptional}
\end{table}

The only quandle in Table \ref{T:exceptional} we were unable to compute was the $(3,3,2,2,2,2)$-quandle for the knotted $K_4$. It is unclear whether this is just due to insufficient computational resources, or whether it may be a counterexample to our Main Conjecture. 

%%%%%%%%%%%%%%%%%%%%%%%
%  S:families
%%%%%%%%%%%%%%%%%%%%%%%
\section{Families of Graphs}\label{S:families}

Now we turn to the graphs which are the singular locus for a spherical orbifold of type 2 (in Dunbar's classification), which fiber over a 2-orbifold.  Dunbar further divides these into types 2a and 2b.  The orbifolds of type 2a are fibered over a 2-orbifold with no boundary; in all these cases, the singular locus is a link.  The orbifolds of type 2b are fibered over a 2-orbifold with boundary components; in these cases, the singular locus usually involves one or more rational tangles. The rational tangle may have a {\em strut} in the innermost twist (corresponding to an exceptional fiber), turning the link into a graph. Figure \ref{F:linktable} shows the links containing rational tangles which are the singular locus for a spherical orbifold; if the rational tangle $p/q$ has $\gcd(p,q) > 1$, then the singular locus is a spatial graph with a strut with label $\gcd(p,q)$.  It is convenient to let the fraction $\frac{0}{q}$ represent the empty rational tangle (just two horizontal arcs) along with a vertical strut labeled $q$. In this section, we are going to consider this special case for two families of links from Figure \ref{F:linktable}. %In this section we will first review rational tangles, and then investigate $N$-quandles for some of the graphs in Figure \ref{F:linktable}.

\begin{figure}[htbp]
\begin{center}
%\tbl{Links $L \in \mathbb{S}^3$ with finite $Q_n(L)$.}
{$
\begin{array}{cc}
\includegraphics[width=1.5in,trim=0 0 0 0,clip]{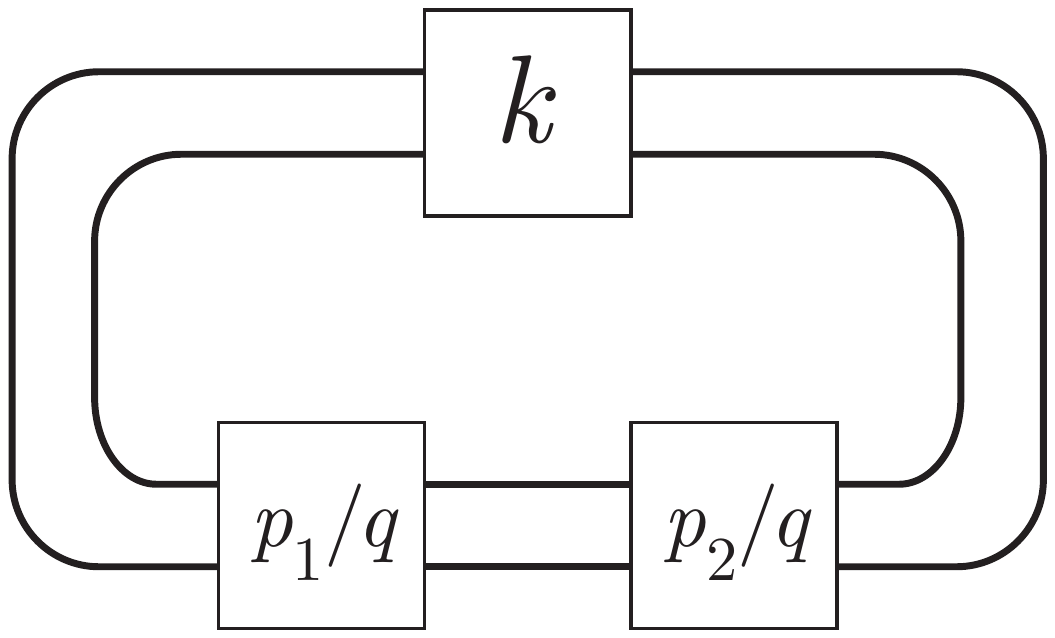}  & \includegraphics[width=1.25in,trim=0 5pt 0 0,clip]{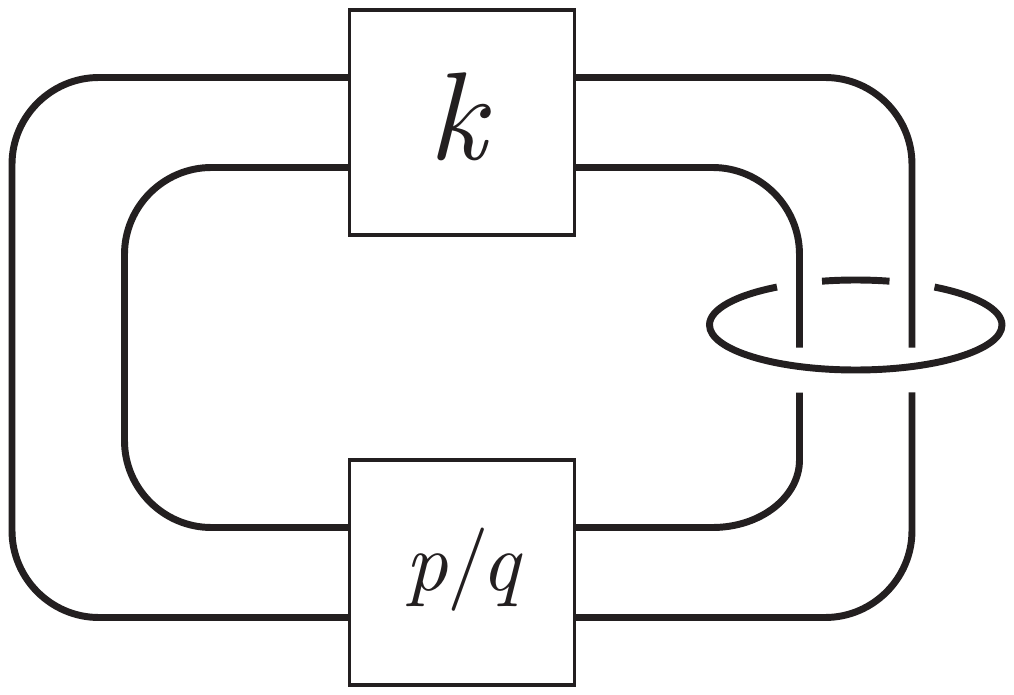} \\
\scriptstyle k+p_1/q+p_2/q \neq 0  & \\
\\
 \includegraphics[width=1.85in,trim=0 0 0 0,clip]{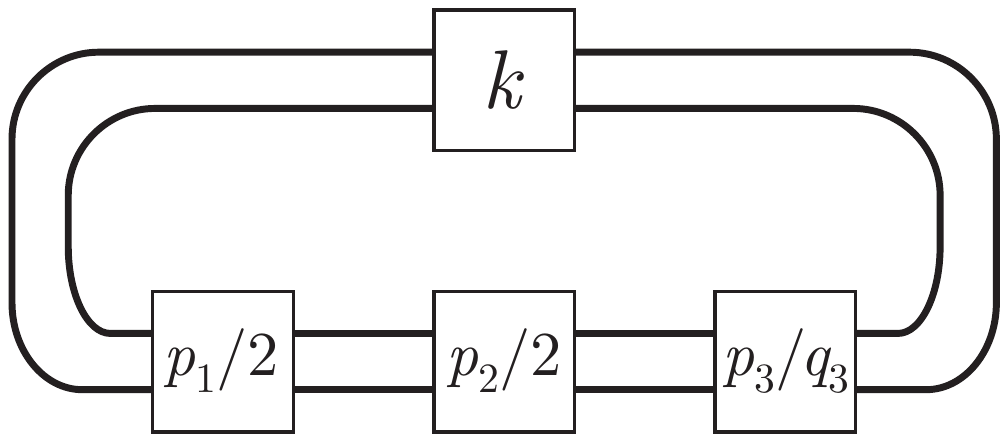} & \includegraphics[width=1.85in,trim=0pt 0pt 0pt 0pt,clip]{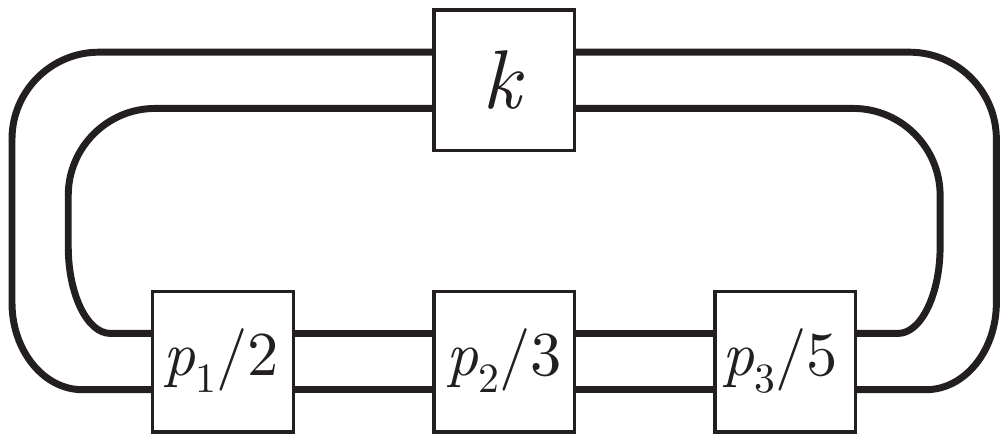}\\
\scriptstyle k+p_1/2+p_2/2+p_3/q_3 \neq 0 & \scriptstyle k+p_1/2+p_2/3+p_3/5 \neq 0\\
\\
\includegraphics[width=1.85in,trim=0pt 0pt 0pt 0pt,clip]{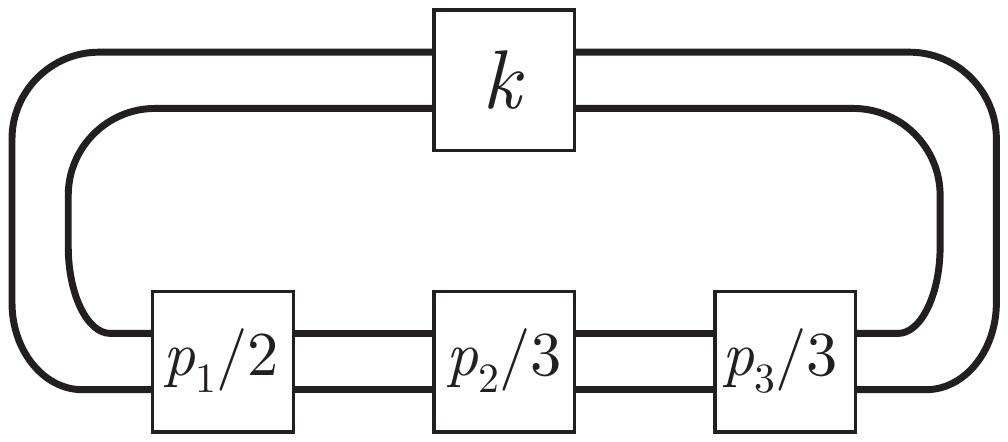}  & \includegraphics[width=1.85in,trim=0pt 0pt 0pt 0pt,clip]{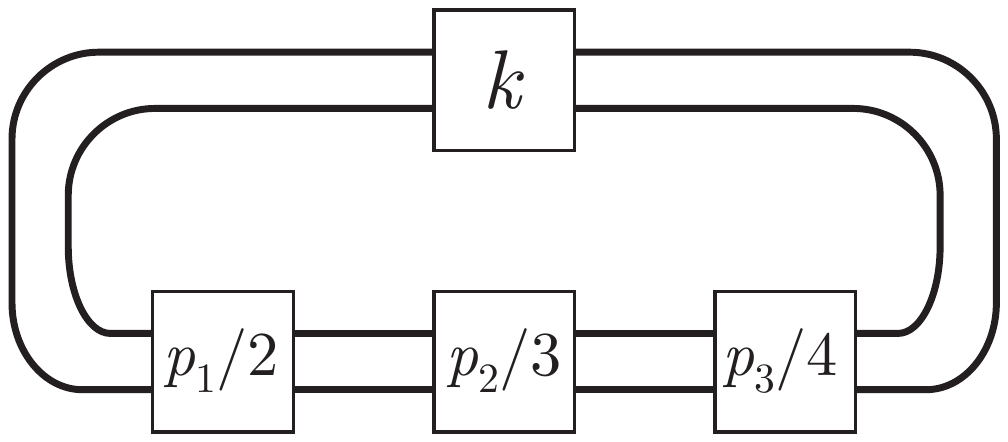}  \\
\scriptstyle k+p_1/2+p_2/3+p_3/3 \neq 0  & \scriptstyle k+p_1/2+p_2/3+p_3/4 \neq 0  
\end{array}
$}
\caption{\parbox{4in}{Links $L \in \mathbb{S}^3$ with finite $Q_n(L)$ which contain rational tangles. \newline Here \fbox{$k$} represents $k$ right-handed half-twists, and \fbox{$p/q$} represents a rational tangle. $L$ is a graph if $\gcd(p,q) > 1$.}}
\end{center}
\label{F:linktable}
\end{figure}

%%%%%%%%%%%%%%%%%%%%%%%
%  SS:families
%%%%%%%%%%%%%%%%%%%%%%%
%\subsection{Some families of graphs with finite quandles} \label{SS:families}

We will consider the first diagram (in the upper left) of Figure \ref{F:linktable}, in the special case when $p_1 = p_2 = 0$ and $q = 1$.  This is the family of graphs where the rational tangles are simply struts labeled $m$ and $n$, as in Figure \ref{F:family1}. We will denote this graph by $G(k, m, n)$. If $m$ or $n$ is 1, we can use Lemma \ref{L:delete} to ignore that strut (the case when they are both 1, giving a twist link, was considered in \cite{CHMS}), giving a graph we denote $G(k, m)$; in this case the underlying graph is either a $\theta$-graph (if $k$ is odd) or a handcuff graph (if $k$ is even).  If $n$ and $m$ are both greater than $1$, then the underlying graph is either $K_4$ (if $k$ is odd) or a double handcuff graph (if $k$ is even).

\begin{figure}[htbp]
$$\includegraphics[width=2in]{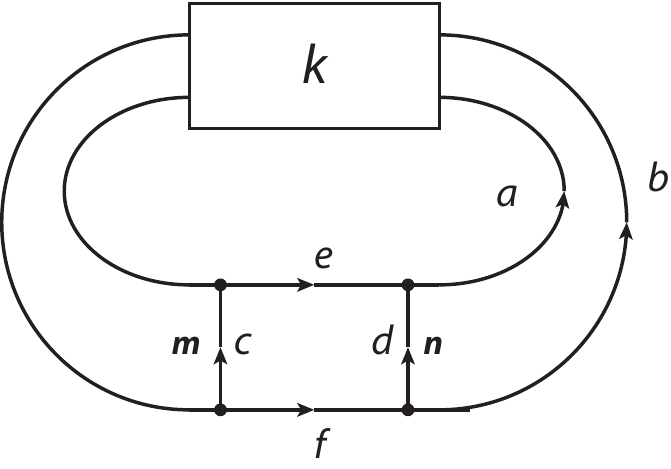} \qquad \rule[-.1in]{0in}{1in}\includegraphics[width=2in]{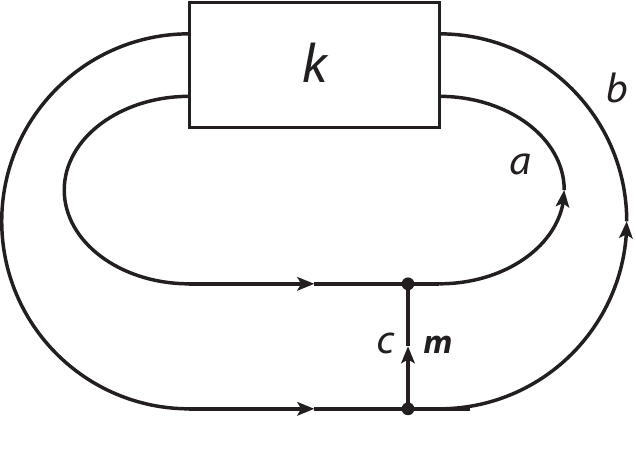}$$
\caption{The labeled graphs $G(k, m, n)$ and $G(k, m)$.}
\label{F:family1}
\end{figure}

Our main result in this section is the following:

\begin{theorem} \label{T:Gkmn}
Let $N = (2, 2, m, n, 2, 2)$ and $G = G(k,m,n)$ (labeled as in Figure \ref{F:family1}).  Then $\vert Q_N(G)\vert = 4kmn + 2km + 2kn$.
\end{theorem}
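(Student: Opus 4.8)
The plan is to compute a presentation of $Q_N(G)$ for $G = G(k,m,n)$ and then run Winker's algorithm "by hand," tracking the Cayley graph well enough to count its vertices exactly. First I would fix a diagram of $G(k,m,n)$ with the $k$ half-twists connecting the two struts (labeled $m$ and $n$) to the four outer edges (labeled $2$), set up the Wirtinger/Niebrzydowski presentation, and then reduce it as in Section~\ref{S:exceptional} to one generator per edge, so the only relations are the vertex relations together with the $N$-quandle relations $x^{y^{n_i}}=x$. The expected shape is a presentation with six generators (say $a,b$ for the two ``outer'' $2$-labeled edges on one side, $c,d$ for the two on the other, and $s,t$ for the struts with labels $m,n$), whose vertex relations express $c,d$ in terms of $a,b,s,t$ via a word encoding the $k$ half-twists. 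Because the struts carry labels $m$ and $n$ while everything else carries label $2$, the point symmetries $S_a,S_b,S_c,S_d$ are involutions, which should make the algebra manageable.

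Next I would identify the inner automorphism group action explicitly: the component $C_s$ of the quandle corresponding to strut $s$ should consist of the $S$-orbit of $s$, and similarly for $t$ and for the $2$-labeled edges. The key structural claim to establish is that $Q_N(G)$ decomposes as a disjoint union whose pieces have sizes $4kmn$, $2km$, and $2kn$ — presumably the "main" component (or components) built from interleaving the order-$m$ symmetry $S_s$, the order-$n$ symmetry $S_t$, and the $k$-fold twisting contributes $4kmn$, while the two strut-components contribute $2km$ and $2kn$ respectively (each being acted on by one order-$\ge 3$ symmetry and the twisting and the order-$2$ symmetries). I would make this precise by finding normal forms $a^{w}$ for the elements, where $w$ ranges over a set of words of the claimed cardinality, then verifying (i) every element has such a form using Lemma~\ref{leftassoc} to re-associate and the relations to reduce, and (ii) distinct normal forms give distinct elements, e.g. by exhibiting an explicit quandle of exactly that size (perhaps built from $\mathbb{Z}_{2k}\times\mathbb{Z}_m\times\mathbb{Z}_n$-type data, i.e. a generalized Alexander/dihedral-type quandle) that realizes the presentation, so that the presented quandle surjects onto it and the normal-form count gives the reverse inequality.

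The main obstacle I anticipate is step (ii): proving the lower bound on $|Q_N(G)|$, i.e. that the normal forms are genuinely distinct and the quandle does not collapse further. Combinatorially enumerating the Winker graph gives the upper bound $4kmn+2km+2kn$ fairly mechanically (trace the vertex relations and the secondary relations $x^{s^m}=x$, $x^{t^n}=x$, $x^{y^2}=x$, and watch the collapses stabilize, using the $k$-periodicity of the twist region to argue the pattern closes up), but ruling out extra identifications requires an honest invariant. I would handle this by constructing the target quandle concretely — modeling each component as cosets or as tuples with an explicit $\rhd$ operation depending on the labels $k,m,n$ — checking it satisfies A1--A3 and all the vertex and $N$-relations, and confirming it has the advertised cardinality; combined with the upper bound this forces equality. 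A secondary subtlety is the dependence on the parity of $k$ (the underlying graph is $K_4$ for $k$ odd and a double handcuff for $k$ even), so I would either treat both parities uniformly via the twist word or split into two parallel cases, noting that the final count $4kmn+2km+2kn$ is parity-independent, which is a useful consistency check on the computation.
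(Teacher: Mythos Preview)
Your proposal is essentially the paper's approach: reduce to six generators (the paper calls the $2$-labeled edges $a,b,e,f$ and the struts $c,d$), derive the vertex relations in terms of the $k$-twist word, and then analyze each of the six components by exhibiting normal forms and checking closure under the generator actions. Your anticipated decomposition $4kmn+2km+2kn$ is exactly right: the four $2$-labeled components each have $kmn$ elements, while the strut components have $2km$ and $2kn$ (note the swap---the $n$-strut component has size $2km$, since $d$ acts trivially on its own component).

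Two refinements in the paper are worth adopting. First, for the lower bound you propose building a separate concrete target quandle; the paper instead uses Winker's termination criterion directly---once you have a candidate set of normal forms and have defined the action of each generator on them, it suffices to check that every defining relation (primary and secondary) holds at every vertex. That \emph{is} the construction of the model quandle, so there is no need for a second object. Second, rather than computing all six components independently, the paper computes only $Q_a$ and $Q_d$ explicitly and then uses isotopies of the spatial graph (a flype and a slide of the strut through the twist block) to exhibit quandle automorphisms permuting $a,b,e,f$ among themselves, forcing those four components to be isomorphic; similarly $Q_c\cong Q_d$ with $m,n$ swapped. This cuts the casework substantially. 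Your worry about the parity of $k$ is handled uniformly: the vertex relations come out as $x^{c(ab)^k ae}=x$ and $x^{fc(ab)^{k-1}a}=x$ regardless of parity, and the normal-form analysis only needs parity of the \emph{layer index} $p$, not of $k$.
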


As a corollary, we will show:

\begin{corollary} \label{C:Gkm}
Let $N = (2, 2, m)$ and $G = G(k,m)$ (labeled as in Figure \ref{F:family1}).  Then $\vert Q_N(G)\vert = 2km + 2k$.
\end{corollary}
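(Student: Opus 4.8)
The plan is to derive Corollary~\ref{C:Gkm} from Theorem~\ref{T:Gkmn} by specializing $n = 1$ and then applying Lemma~\ref{L:delete}. Concretely, consider the graph $G(k, m, 1)$ with labeling $N' = (2, 2, m, 1, 2, 2)$. Theorem~\ref{T:Gkmn} (with $n = 1$) gives $\vert Q_{N'}(G(k,m,1))\vert = 4km + 2km + 2k = 6km + 2k$. Now the strut with label $1$ is exactly the edge $e_4$; since its label is $n_4 = 1$, the ``in particular'' clause of Lemma~\ref{L:delete} applies: deleting $e_4$ gives a graph $G_4 = G(k,m,1) - e_4$ with labeling $N'_4 = (2,2,m,2,2)$, and $\vert Q_{N'_4}(G_4)\vert = \vert Q_{N'}(G(k,m,1)) - C_4\vert = (6km + 2k) - \vert C_4\vert$, where $C_4$ is the component of the quandle corresponding to $e_4$.

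First I would identify the graph $G_4$. Deleting the strut labeled $1$ from $G(k,m,1)$ leaves a single remaining strut (labeled $m$) together with the $k$ half-twists, which is precisely the graph $G(k,m)$ of Figure~\ref{F:family1}; moreover, the two edges $e_1, e_2$ that were the ``arcs above and below'' the deleted strut merge (via a degree-2 vertex that can be absorbed, cf.\ Lemma~\ref{L:vertex}) so that the edge count drops from five to three, with induced labeling $(2, 2, m)$. This is exactly the pair $(G(k,m), N)$ in the statement of the corollary, so $\vert Q_N(G(k,m))\vert = 6km + 2k - \vert C_4\vert$. It remains only to compute $\vert C_4\vert$, and comparing with the claimed answer $2km + 2k$ forces $\vert C_4\vert = 4km$.

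So the one genuine computation is determining the size of the component $C_4$ of $Q_{N'}(G(k,m,1))$ corresponding to the strut $e_4$. I expect this to come out of the explicit description of the quandle produced in the proof of Theorem~\ref{T:Gkmn}: that proof must exhibit $Q_{N'}(G(k,m,1))$ concretely (presumably via its Cayley graph or an explicit set-theoretic model), and the component containing the generator of $e_4$ should be readily readable off that model, with the $e_4$-component having size $4km$ while the other components account for the remaining $2km + 2k$ elements. Alternatively, one can argue more structurally: $C_4$ consists of all elements $x_4^w$, and since $n_4 = 1$ means $x_4$ acts trivially, one reduces $w$ modulo the relation $y^{x_4} = y$; tracking the orbit of $x_4$ under the point symmetries coming from the $e_1, e_2, e_3$ generators (each of order $2$, $2$, $m$ respectively) should give a count of $4km$. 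Either way, the main obstacle is this bookkeeping for $\vert C_4\vert$; everything else is formal from Theorem~\ref{T:Gkmn} and Lemmas~\ref{L:delete} and~\ref{L:vertex}. If the structure of the Theorem~\ref{T:Gkmn} proof makes the $e_4$-component transparent, the corollary is essentially immediate.
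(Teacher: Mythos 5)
Your overall strategy (specialize $n=1$ in Theorem~\ref{T:Gkmn}, then strip away the strut labeled $1$) is the same as the paper's, but your bookkeeping contains a genuine error. Lemma~\ref{L:delete} with $n_4=1$ gives you the \emph{five}-edge graph $G(k,m,1)-e_4$ with labeling $(2,2,m,2,2)$, and for that graph the count is $(6km+2k)-\vert C_4\vert$. Here $C_4$ is the component $Q_d$ of the strut, and the proof of Theorem~\ref{T:Gkmn} shows $\vert Q_d\vert = 2km$ (independent of $n$), not $4km$; so your reverse-engineered value $\vert C_4\vert = 4km$, and the orbit count you sketch to justify it, are wrong. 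The missing $2km$ does not come from $C_4$ at all: it comes from the two applications of Lemma~\ref{L:vertex} needed to pass from the five-edge graph down to the three-edge graph $G(k,m)$. Deleting the strut leaves two degree-$2$ vertices (where $a$ meets $e$ and where $b$ meets $f$), and Lemma~\ref{L:vertex} says that absorbing each such vertex deletes an entire isomorphic copy of a component --- here one copy each of $Q_e\cong Q_a$ and $Q_f\cong Q_b$, each of size $kmn=km$ when $n=1$. Your proposal treats this absorption as quandle-neutral relabeling, which contradicts Lemma~\ref{L:vertex}. The correct accounting is
$$(4km+2km+2k) \;-\; \underbrace{2km}_{Q_d} \;-\; \underbrace{km}_{Q_e} \;-\; \underbrace{km}_{Q_f} \;=\; 2km+2k,$$
which is exactly how the paper argues (``delete the components corresponding to the generators $d$, $e$, $f$''). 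Had you actually carried out your proposed computation of $\vert C_4\vert$, you would have found $2km$ and been left $2km$ short of the claimed answer.
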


Our first task is to find a presentation for $Q_N(G)$, for $N = (2, 2, m, n, 2, 2)$ and $G = G(k,m,n)$. As described in section \ref{S:Cayley}, the Wirtinger presentation would have a generator for every arc of the diagram, and relations for each crossing, vertex and generator. However, we can simplify the presentation by observing that all the generators corresponding to arcs in the block of $k$ half-twists can be written in terms of the generators $a$ and $b$ (see Figure \ref{F:family1}), using the crossing relations. So it's enough to trace these strands through the block of half-twists, to find the labels for the arcs on the left-hand side of the block. These labels are easily determined by an inductive argument, as observed in \cite{Me}.

\begin{lemma}\cite{Me} \label{L:halftwists}
The arcs on either side of the block of $k$ right-handed half-twists are labeled as shown in Figure \ref{F:halftwists} (for $k$ even and $k$ odd).  Here $X = (ba)^t$ and $Y = (ba)X = (ba)^{t+1}$. (If $k < 0$, there are $\vert k \vert$ left-handed half-twists; the same formulas hold, where $(ba)^{-1} = \overline{ba} = ab$.)
\begin{figure}[htbp]
\centerline{\scalebox{1}{\includegraphics{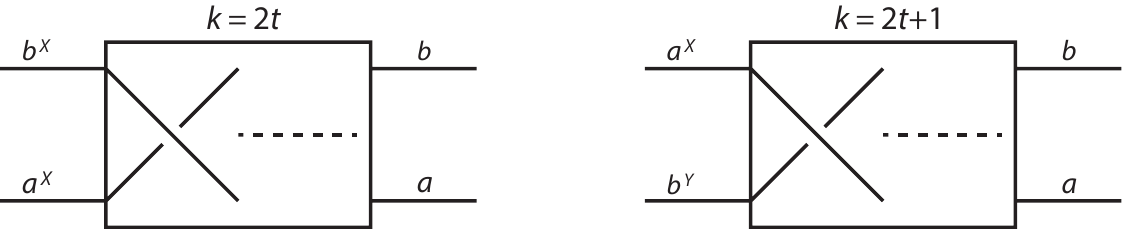}}}
\caption{Blocks of right-handed half-twists.}
\label{F:halftwists}
\end{figure}
\end{lemma}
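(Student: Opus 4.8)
The plan is to prove the labeling by induction on the number $k$ of half-twists, tracing the two incoming strands $a$ and $b$ (as in Figure~\ref{F:family1}) through the block one crossing at a time. Reading the orientations and over/under data off the crossing relation $x_i = x_k^{x_j}$ of Figure~\ref{relations}, a single right-handed half-twist acts on the ordered pair of current arc labels $(u,v)$ (with positions fixed as in the figure) by
$$\sigma\colon (u,v)\longmapsto (v^{u},\, u),$$
so that the labels after the whole block are obtained by iterating $\sigma$ starting from $(a,b)$. The base case $k=0$ is immediate: $X=(ba)^0$ is the empty word and the arcs are unchanged.

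The heart of the argument is a single inductive step showing that each \emph{full} twist (two half-twists) prepends $ba$ to the conjugating word. Concretely, I would show that if the labels after $2t$ half-twists are $(a^{W}, b^{W})$ with $W=(ba)^{t}$, then applying $\sigma$ twice gives $(a^{(ba)W}, b^{(ba)W})$. Re-associating with Lemma~\ref{leftassoc}, one computes $\sigma(a^{W}, b^{W}) = \bigl((b^{W})^{(a^{W})},\, a^{W}\bigr) = (b^{aW},\, a^{W})$, using $(b^{W})^{(a^{W})}=b^{W\bar W a W}=b^{aW}$, and then $\sigma(b^{aW}, a^{W}) = \bigl((a^{W})^{(b^{aW})},\, b^{aW}\bigr) = (a^{\bar a b a W},\, b^{aW})$, using $(a^{W})^{(b^{aW})}=a^{W\,\overline{aW}\,b\,aW}=a^{\bar a b a W}$. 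The exponents then collapse to clean powers because conjugating a generator by a word beginning with that same generator (or its inverse) absorbs the leading letter: since $a^{\bar a}=a$ we get $a^{\bar a b a W}=a^{baW}=a^{(ba)W}$, and since $b^{b}=b$ we get $b^{aW}=b^{baW}=b^{(ba)W}$. This establishes the even case $X=(ba)^{t}$.

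One further application of $\sigma$ handles odd $k$: applied to $(a^{X}, b^{X})$ it yields $(b^{aX},\, a^{X})$, and the same absorption identity $b^{b}=b$ gives $b^{aX}=b^{(ba)X}=b^{Y}$ with $Y=(ba)X=(ba)^{t+1}$. Thus the two output arcs for odd $k$ are $a^{X}$ and $b^{Y}$, exhibiting both $X$ and $Y=(ba)X$ exactly as in Figure~\ref{F:halftwists} (the assignment of $X$ and $Y$ to the specific arcs being read from the figure). For $k<0$ the block is $|k|$ left-handed half-twists, governed by $\sigma^{-1}$, and the identical induction applies with $ba$ replaced by $(ba)^{-1}=\overline{ba}=ab$, so the stated formulas persist with $t<0$.

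The main obstacle is fixing the precise form of $\sigma$ from the figure, namely the sign of the conjugation and which strand is the over-strand: an incorrect choice would produce conjugation by $\overline{ba}$ or $ab$ rather than $ba$, and would scramble which output arc carries $X$ versus $Y$. Once $\sigma$ is pinned down, the re-associations supplied by Lemma~\ref{leftassoc} together with the two absorption identities $a^{\bar a}=a$ and $b^{b}=b$ are the engine that turns the nominally long exponents $\bar a b a W$ and $aW$ into the clean powers $(ba)^{t}$, and the remaining bookkeeping is routine.
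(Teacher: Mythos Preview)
Your proposal is correct and is precisely the inductive argument the paper alludes to: the paper does not spell out a proof of Lemma~\ref{L:halftwists} at all, merely stating that ``these labels are easily determined by an inductive argument, as observed in \cite{Me}.'' Your step-by-step trace via $\sigma(u,v)=(v^{u},u)$, the re-association using Lemma~\ref{leftassoc}, and the absorption identities $a^{\bar a}=a$, $b^{b}=b$ are exactly what that inductive argument amounts to.
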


So we now have a presentation with six generators, and ten relations (four relations for the vertices, and six for the generators). The relations for the generators are (where $x$ is an arbitrary element of the quandle):
$$x^{a^2} = x^{b^2} = x^{e^2} = x^{f^2} = x^{c^m} = x^{d^n} = x.$$
In particular, this means $x^a = x^{\bar{a}}$, $x^b = x^{\bar{b}}$, $x^e = x^{\bar{e}}$ and $x^f = x^{\bar{f}}$ for any element $x$.

The relations for the two right-hand vertices are $x^{de\bar{a}} = x^{dea} = x$ and $x^{bd\bar{f}} = x^{bdf} = x$. For the left hand vertices, we first consider the case when $k = 2t$ is even.  Then, using Lemma \ref{L:halftwists}, we have:
\begin{align*}
(1)\ x &= x^{ca^X \bar{e}} = x^{ca^X e} = x^{ca^{(ba)^t}e} = x^{c(ab)^ta(ba)^t e} = x^{c(ab)^k ae}. \\
(2)\ x &= x^{fc \overline{b^X}} = x^{fc\overline{b^{(ba)^t}}} = x^{fc\overline{(ab)^tb(ba)^t}} = x^{fc \overline{(ab)^{k-1}a}} \\
&= x^{fc \bar{a} (\bar{b}\bar{a})^{k-1}} = x^{fc a (ba)^{k-1}} = x^{fc (ab)^{k-1} a}.
\end{align*}
Now we consider the case when $k = 2t+1$ is odd.  Again using Lemma \ref{L:halftwists}, we have:
\begin{align*}
(1)\ x &= x^{cb^Y \bar{e}} = x^{cb^Y e} = x^{cb^{(ba)^{t+1}}e} = x^{c(ab)^{t+1}b(ba)^{t+1} e} = x^{c(ab)^k ae}. \\
(2)\ x &= x^{fc \overline{a^X}} = x^{fc\overline{a^{(ba)^t}}} = x^{fc\overline{(ab)^ta(ba)^t}} = x^{fc \overline{(ab)^{k-1}a}} \\
&= x^{fc \bar{a} (\bar{b}\bar{a})^{k-1}} = x^{fc a (ba)^{k-1}} = x^{fc (ab)^{k-1} a}.
\end{align*}
So, in fact, we get the same relations (in terms of $k$) in both cases.  The presentation for $Q_N(G)$ is then:
\begin{multline*}
Q_N(G) = \langle a, b, c, d, e, f \mid x^{a^2} = x^{b^2} = x^{e^2} = x^{f^2} = x^{c^m} = x^{d^n} = x, \\
x^{dea} = x, x^{bdf} = x, x^{c(ab)^k ae} = x, x^{fc (ab)^{k-1} a} = x \rangle.
\end{multline*}

%%%%%%%%%%%%%%%%%%%%%%%
%  SSS:relations
%%%%%%%%%%%%%%%%%%%%%%%
\subsection{Relations in $Q_N(G)$} \label{SSS:relations}

In this section, we will prove some useful relations in the quandle $Q_N(G)$. We keep in mind the following observation: if $x^{wy} = x$ for every element $x$ in the quandle, where $w$ is a word in the quandle and $y$ is an element of the quandle, then $(x^y)^{wy} = x^y$, so $x^{yw} = x^{y\inv{y}} = x$.  In other words, if we have a relation $x^w = x$, then we can cyclically permute the factors of $w$ to get more such relations.

\begin{lemma} \label{L:rel1}
For any element $x\in Q_N(G)$,
\begin{enumerate}[label=(\arabic*)]
	\item $x^a = x^{dad} = x^{\inv{d}a\inv{d}} = x^{de} = x^{e\inv{d}}$.
	\item $x^b = x^{dbd} = x^{\inv{d}b\inv{d}} = x^{df} = x^{f\inv{d}}$.
	\item $x^{ab} = x^{dab\inv{d}} = x^{\inv{d}abd} = x^{ef}$.
\end{enumerate}
\end{lemma}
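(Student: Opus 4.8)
The plan is to derive each identity from the defining relations of $Q_N(G)$ using only two tools: (i) Lemma~\ref{leftassoc} for re-associating products, and (ii) the cyclic-permutation principle noted just before the lemma (if $x^w = x$ for all $x$, then $x^{w'} = x$ for any cyclic rotation $w'$ of $w$). I will also freely use the generator relations $x^{a^2} = x^{b^2} = x^{e^2} = x^{f^2} = x$, which give $x^a = x^{\bar a}$, $x^b = x^{\bar b}$, $x^e = x^{\bar e}$, $x^f = x^{\bar f}$.

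First I would establish part (1). The right-hand vertex relation is $x^{dea} = x$. Cyclically permuting gives $x^{ade} = x$ and $x^{ead} = x$; the first of these, rewritten, says $x^a = x^{\bar d \bar e} = x^{\overline{ed}}$, and since $x^e = x^{\bar e}$ we can massage this into the stated forms. Concretely: from $x^{dea} = x$ we get $x^{a} = x^{\overline{de}} = x^{\bar e \bar d}$; using $x^{\bar e}=x^e$ applied appropriately (and $x^{\bar d}$ is its own thing) yields $x^a = x^{e\bar d}$, and applying the relation once more in the other rotation gives $x^a = x^{d e}$ as well, hence $x^{de} = x^{e\bar d}$. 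For the $x^{dad}$ form: substitute $x^a = x^{de}$ into $x^{dad}$ to get $x^{d(de)d}$, then re-associate via Lemma~\ref{leftassoc} and collapse the $d$'s using $x^{\bar d d}=x$ together with the relation $x^{dea}=x$ in the form $x^{ed}=x^{\bar a}=x^a$ — this should telescope to $x^a$. The $x^{\bar d a \bar d}$ form follows symmetrically (or by applying the point symmetry $S_{\bar d}$ throughout). Part (2) is completely parallel, using the other right-hand vertex relation $x^{bdf} = x$ in place of $x^{dea}=x$, with $f$ playing the role of $e$ and $b$ the role of $a$; I would just state it follows "by the same argument, with $b, f$ in place of $a, e$."

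For part (3), I would combine (1) and (2). We have $x^{ab} = (x^a)^b$; substitute $x^a$ using part (1). For the form $x^{ef}$: write $x^{ef} = (x^e)^f$ and push the relations $x^e$-side and $x^f$-side from (1) and (2) — since $x^{de}=x^a$ gives $x^e = x^{\bar d a}$ (applying $S_{\bar d}$) hmm, more carefully I would instead start from $x^{ab}$, replace $a$ by its expression $d e \bar{} $... Actually the cleanest route: from (1), $x^{de} = x^a$, so $x^{dea}$-type manipulations let me replace. I expect the identity $x^{ab}=x^{ef}$ to come from chaining $x^{ab} = x^{(de)(df)}$ (substituting both (1) and (2) in the two-sided form) and re-associating the middle $e d$ via Lemma~\ref{leftassoc} using $x^{ed} = x^a = x^{de}$ to cancel. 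The forms $x^{dab\bar d}$ and $x^{\bar d ab d}$ then come from conjugating the whole identity by $S_d^{\pm 1}$ and using the $x^{dad}, x^{dbd}$ forms from (1) and (2).

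The main obstacle I anticipate is bookkeeping in the re-association steps for the "sandwiched" forms like $x^a = x^{dad}$ and $x^{ab} = x^{dab\bar d}$: Lemma~\ref{leftassoc} turns $(a^u)^{(b^v)}$ into $a^{u\bar v b v}$, so each substitution spawns several extra letters that must then be cancelled in pairs using the generator relations $x^{d^{?}}$—but note $d$ has no order-$2$ relation (its order is $n$), so the $d$'s cannot simply be squared away; they must cancel as genuine inverse pairs $d\bar d$. Getting these cancellations to line up is the delicate part, and I would present one representative derivation in full (say $x^a = x^{dad}$) and assert the rest follow "by identical manipulations" to keep the argument readable.
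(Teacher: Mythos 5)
Your overall strategy is the same as the paper's: derive (1) and (2) from the vertex relations $x^{dea}=x$ and $x^{bdf}=x$ by cyclic permutation together with the order-two relations on $a,b,e,f$, then obtain (3) by substituting the sandwiched forms. However, two specific steps as written would fail. First, the auxiliary identity you invoke, ``$x^{ed}=x^{\inv{a}}=x^a$,'' is false: the cyclic permutations of $x^{dea}=x$ give $x^{de}=x^{\inv{a}}=x^a$ and $x^{ad}=x^{\inv{e}}=x^e$, but nothing about $x^{ed}$; in fact one can compute $x^{ed}=x^{\inv{d}ad}=x^{ad^2}$, which differs from $x^a$ unless $n\mid 2$. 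Second, your derivation of $x^a=x^{dad}$ substitutes $x^a=x^{de}$ into the middle, producing $x^{d(de)d}=x^{d^2ed}$, and---as you yourself observe at the end---the two leading $d$'s can neither be squared away nor cancelled, so this does not telescope.

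The repair is immediate using the other form you already derived: substitute $x^a=x^{e\inv{d}}$ into the middle instead, so that $x^{dad}=x^{d(e\inv{d})d}=x^{de}=x^a$, and dually $x^{\inv{d}a\inv{d}}=x^{\inv{d}(de)\inv{d}}=x^{e\inv{d}}=x^a$; this is exactly what the paper does. For (3), your route $x^{ab}=x^{(de)(df)}$ does work, but via $x^{(de)(df)}=x^{adf}=x^{ef}$ (first applying the operator identity $x^{de}=x^a$, then $x^{ad}=x^e$), not via the spurious $x^{ed}=x^a$; and the forms $x^{dab\inv{d}}$ and $x^{\inv{d}abd}$ follow, as you suggest, by inserting $d\inv{d}$ to write $x^{dab\inv{d}}=x^{(dad)(\inv{d}b\inv{d})}=x^{ab}$ and $x^{\inv{d}abd}=x^{(\inv{d}a\inv{d})(dbd)}=x^{ab}$, which is the paper's argument.
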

\begin{proof}
Since $x^{dea} = x$, we immediately have $x^a = x^{de}$.  By cyclic permutation, $x^{ade} = x$, so $x^a = x^{e\inv{d}}$.  Then $x^{dad} = x^{de\inv{d}d} = x^{de} = x^a$, and $x^{\inv{d}a\inv{d}} = x^{\inv{d}de\inv{d}} = x^{e\inv{d}} = x^a$.  This gives relation (1).  Similarly, using $x^{bdf} = x$ gives relation (2).

Then $x^{ab} = x^{(dad)(\inv{d}b\inv{d})} = x^{dab\inv{d}}$ and $x^{ab} = x^{(\inv{d}a\inv{d})(dbd)} = x^{\inv{d}abd}$.  Since $x^{\inv{d}a} = x^{(\inv{d}a\inv{d})d} = x^{(e\inv{d})d} = x^e$ and $x^{bd} = x^{\inv{d}(dbd)} = x^{\inv{d}(df)} = x^f$, we conclude that $x^{ab} = x^{ef}$, completing relation (3).
\end{proof}

\begin{lemma} \label{L:rel2}
For any element $x\in Q_N(G)$,
\begin{enumerate}[label=(\arabic*)]
	\item $x^{(ab)^k} = x^{\inv{c}\inv{d}} = x^{(ef)^k}$.
	\item $x^{cd\inv{c}\inv{d}} = x$.
	\item $x^{c^iu} = x^{u\inv{c}^i}$ and $x^{d^ju} = x^{u\inv{d}^j}$ for $u \in \{a, b, e, f\}$, and $i, j \in \Z$.
\end{enumerate}
\end{lemma}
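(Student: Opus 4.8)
The plan is to establish the three parts of Lemma~\ref{L:rel2} in order, using Lemma~\ref{L:rel1}, the defining relations of the presentation, and the cyclic-permutation trick noted at the start of the subsection.

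\textbf{Part (1).} First I would take the vertex relation $x^{c(ab)^kae} = x$ and rewrite it. Using Lemma~\ref{L:rel1}(1), $x^{ae} = x^{(dad)e}$, but it is cleaner to cyclically permute: from $x^{c(ab)^kae} = x$ we get $x^{(ab)^kaec} = x$, and since $x^{ae} = x^{a\cdot e}$ we can try to simplify $aec$. Actually the more direct route: from $x^{c(ab)^kae}=x$ and the fact (Lemma~\ref{L:rel1}(1)) that $x^a = x^{de}$, so $x^{ae} = $ applying this at the appropriate element — here I must be careful, since $x^a=x^{de}$ holds for all $x$, it gives $x^{wae} = x^{w(de)e} = x^{wd}$ for any word $w$ (using $x^{e^2}=x$). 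Hence $x^{c(ab)^kae} = x^{c(ab)^kd} = x$, so $x^{(ab)^k} = x^{\inv c \inv d}$ after cyclically permuting and inverting: $x^{c(ab)^kd}=x \Rightarrow x^{(ab)^k} = x^{\inv c \cdot x \cdot ?}$ — more precisely $x^{(ab)^kd\inv{(ab)^k}\cdots}$; I will instead write $x^{c(ab)^kd}=x$ as $x^{(ab)^k} = x^{\inv c \inv d}$ by noting $\bigl(x^{\inv c}\bigr)^{c(ab)^kd} = x^{\inv c}$, i.e. $x^{(ab)^kd} = x^{\inv c \cdot (\text{inverse of }c)}$... The honest statement is: $x^{c(ab)^kd} = x$ for all $x$ is equivalent to $x^{(ab)^k} = x^{\inv c \inv d}$ for all $x$ (substitute $x \mapsto x^{\inv c \inv d}$ and use the axioms, or just note both say the automorphism $S = (\cdot)^{c(ab)^k d}$ is the identity, hence $(\cdot)^{(ab)^k} = (\cdot)^{\inv c \inv d}$). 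For the second equality $x^{(ab)^k} = x^{(ef)^k}$, I would iterate Lemma~\ref{L:rel1}(3): since $x^{ab} = x^{ef}$ for all $x$, the automorphisms $(\cdot)^{ab}$ and $(\cdot)^{ef}$ coincide, hence so do their $k$-th powers, giving $x^{(ab)^k} = x^{(ef)^k}$.

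\textbf{Part (2).} This should follow by combining Part (1) with the symmetry between $c$ and the $(ef)$-side. From Part (1), $x^{(ab)^k} = x^{\inv c\inv d}$ and also (by the same argument applied to the other left-vertex relation $x^{fc(ab)^{k-1}a}=x$) I expect to derive a companion identity; alternatively, Part (2) follows because $(\cdot)^{\inv c \inv d}$ and $(\cdot)^{(ab)^k}$ are equal, and I can also get $(\cdot)^{\inv d \inv c}$ equal to something symmetric, forcing $cd$ and $dc$ to act the same, i.e. $x^{cd\inv c\inv d} = x$. Concretely I would show $x^{\inv c\inv d} = x^{(ab)^k} = x^{(ef)^k}$ and that the $(ef)$-expression, via Lemma~\ref{L:rel1}, can be rewritten with $d$ and $c$ swapped; equating gives $x^{\inv d \inv c} = x^{\inv c \inv d}$, which rearranges to $x^{cd\inv c \inv d}=x$.

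\textbf{Part (3).} For $u \in \{a,b,e,f\}$, the claim $x^{c^i u} = x^{u \inv c^i}$ is equivalent to $x^{c^i u c^{-i} \inv u} = x$, i.e. that $c^i$ and $u$ "commute" as far as the quandle action is concerned. Since $x^{c^m}=x$, it suffices to prove the case $i=1$ (then induct) and the case $i$ negative follows by inverting. The $i=1$ case, $x^{cu} = x^{u\inv c}$, i.e. $x^{cu\inv c \inv u} = x$: for $u=a$ and $u=b$ this should come from Part~(2) together with Lemma~\ref{L:rel1}, using that $a$ and $b$ can be written in terms of $d,e,f$ and that $c$ interacts with $d$ as in Part~(2); for $u = e, f$ similarly using the other vertex relations. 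The parallel statement for $d^j$ and $u$ follows by the mirror argument (the roles of the two struts, and of the left/right vertices, are symmetric). I would spell out one representative subcase, say $x^{ca\inv c\inv a} = x$, by substituting $x^a = x^{de}$ (Lemma~\ref{L:rel1}(1)) to reduce to a statement about $c$, $d$, $e$, then invoking Part~(2) and the relation $x^{c(ab)^kd}=x$ derived above; the remaining subcases are analogous.

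\textbf{Main obstacle.} The delicate point throughout is the passage between "$x^w = x$ for all $x$" and "$x^{w_1} = x^{w_2}$ for all $x$" — i.e. treating relations as statements about inner automorphisms and manipulating them by substitution and cyclic permutation. The bookkeeping in Part~(3), reducing each $u \in \{a,b,e,f\}$ to the already-proven commutation of $c$ (resp.\ $d$) with the generators appearing in the vertex relations, is where the argument is most likely to get fiddly; I expect Parts~(1) and~(2) to be short once the rewriting $x^{ae} = x^{d}$-type simplifications from Lemma~\ref{L:rel1} are used, and Part~(3) to carry the bulk of the work.
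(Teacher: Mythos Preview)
Your Part~(1) is essentially the paper's argument, just less cleanly stated: the paper cyclically permutes to $x^{(ab)^kaec}=x$, then uses Lemma~\ref{L:rel1}(1) to rewrite $x^{ea}=x^{e(e\inv d)}=x^{\inv d}$, giving $x^{(ab)^k}=x^{\inv c\inv d}$ directly. Your shortcut $x^{ae}=x^d$ is the same idea.

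Part~(2) has a real gap. Your two suggestions do not work as written. The ``companion identity'' from the other left-vertex relation $x^{fc(ab)^{k-1}a}=x$ gives nothing new: using $x^f=x^{\inv d b}$ (Lemma~\ref{L:rel1}(2)) it rewrites to $x^{dc(ab)^k}=x$, which is Part~(1) again, not $x^{cd(ab)^k}=x$. And there is no $c\leftrightarrow d$ symmetry in the relations to exploit---Lemma~\ref{L:rel1} involves only $d$, not $c$. The actual mechanism is the one fact you never invoke: Lemma~\ref{L:rel1}(3) says $x^{ab}=x^{dab\inv d}$, i.e.\ the automorphisms $(\cdot)^{ab}$ and $(\cdot)^d$ commute. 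The paper's proof is then one line: $x^{cd\inv c\inv d}=x^{cd(ab)^k}=x^{c(ab)^kd}=x^{c\inv c\inv d d}=x$.

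Part~(3) also has gaps. First, the $d^j$ case is not a ``mirror argument''---it is immediate from Lemma~\ref{L:rel1} (e.g.\ $x^a=x^{dad}$ gives $x^{da}=x^{a\inv d}$), and the paper dispatches it in one sentence. Second, your plan for the $c^i$ case is circular: substituting $x^a=x^{de}$ into $x^{ca}=x^{a\inv c}$ and using Part~(2) reduces the $u=a$ case to the $u=e$ case, which you have not established independently. The paper instead writes $x^c$ as a word in $a,b,e,f$ (namely $x^c=x^{ea(ba)^k}=x^{fa(ba)^{k-1}}$ from the two left-vertex relations) and then, for each $u\in\{a,b,e,f\}$, performs an explicit computation of $x^{c^iu}$ using only the $d$-commutation from Lemma~\ref{L:rel1}, which is already in hand. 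This is the ``bulk of the work'' you anticipated, but your reduction strategy does not carry it out.
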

\begin{proof}
We begin with the relation $x^{c(ab)^k ae} = x$. By a cyclic permutation, this means $x^{(ab)^k aec} = x$, so $x^{(ab)^k} = x^{\inv{c}ea}$.  By Lemma \ref{L:rel1}(1), $x^{\inv{c}ea} = x^{\inv{c}e(e\inv{d})} = x^{\inv{c}\inv{d}}$.  So $x^{(ab)^k} = x^{\inv{c}\inv{d}}$.  And since $x^{ab} = x^{ef}$ by Lemma \ref{L:rel1}(3), $x^{(ab)^k} = x^{(ef)^k}$, completing the proof of relation (1).

For relation (2), observe that $x^{cd\inv{c}\inv{d}} = x^{cd(ab)^k}$ by part (1). $x^{cd(ab)^k} = x^{cd(ab)^k\inv{d}d} = x^{c(dab\inv{d})^k d} = x^{c(ab)^k d}$ by Lemma \ref{L:rel1}(3).  Finally, $x^{c(ab)^k d} = x^{c\inv{c}\inv{d}d} = x$ by part (1) again.  Hence $x^{cd\inv{c}\inv{d}} = x$.

From Lemma \ref{L:rel1}, we know that $x^{du} = x^{u\inv{d}}$ and $x^{\inv{d}u} = x^{ud}$ for $u \in \{a, b, e, f\}$.  So $x^{d^ju} = x^{u\inv{d}^j}$ for any integer $j$. It remains to show the same relations using $c$ instead of $d$. To do this, we use the relations $x^{c(ab)^k ae} = x$ and $x^{fc (ab)^{k-1} a} = x$.  These imply that $x^c = x^{ea(ba)^k} = x^{fa(ba)^{k-1}}$ and hence $x^{\inv{c}} = x^{(ab)^kae} = x^{(ab)^{k-1}af}$. So (using Lemma \ref{L:rel1} as needed):
\begin{align*}
x^{c^ia} &= x^{[ea(ba)^k]^ia} =x^{[ea(dba\inv{d})^k]^ia} = x^{[ead(ba)^k\inv{d}]^ia} = x^{[ee(ba)^k\inv{d}]^ia} \\
&= x^{[aa(ba)^k\inv{d}]^ia} = x^{a[a(ba)^k\inv{d}a]^i} = x^{a[a(ba)^ke]^i} = x^{a[(ab)^kae]^i} = x^{a\inv{c}^i}. \\
x^{c^ib} &= x^{[fa(ba)^{k-1}]^i b} = x^{[bda(ba)^{k-1}]^i b} = x^{b [da(ba)^{k-1} b]^i} = x^{b [d(ab)^k]^i} \\
&= x^{b [d\inv{d}(ab)^k d]^i} = x^{b [(ab)^k bf]^i} = x^{b [(ab)^{k-1}af]^i} = x^{b \inv{c}^i}. \\
x^{c^ie} &= x^{[ea(ba)^k]^ie} = x^{e[a(ba)^ke]^i} = x^{e[(ab)^kae]^i} = x^{e\inv{c}^i}. \\
x^{c^if} &= x^{[fa(ba)^{k-1}]^if} = x^{f[a(ba)^{k-1}f]^i} = x^{f[(ab)^{k-1}af]^i} = x^{f\inv{c}^i}.
\end{align*}
This completes the proof of relation (3).
\end{proof}

%%%%%%%%%%%%%%%%%%%%%%%
%  SSS:Qa
%%%%%%%%%%%%%%%%%%%%%%%
\subsection{The component $Q_a$ of $Q_N(G)$} \label{SSS:Qa}

The quandle $Q_N(G)$ has six components, one for each of the generators $a, b, c, d, e, f$.  We let $Q_u$ denote the component containing the generator $u$, for $u = a, b, c, d, e, f$. We will begin by describing $Q_a$. We will show that $\abs{Q_a} = kmn$; the Cayley graph for $Q_a$ can be viewed as having $k$ ``layers," each of which contains $mn$ vertices. Each layer can be embedded as an $m\times n$ grid on a torus. The edges labeled $c$ and $d$ connect vertices within each layer, while the edges labeled $a, b, e, f$ connect vertices in adjacent layers. Figure~\ref{F:a_component} shows the case when $m = n = 3$ and $k = 4$.

\begin{figure}[h]
    \centering
    \includegraphics[width=0.8\textwidth]{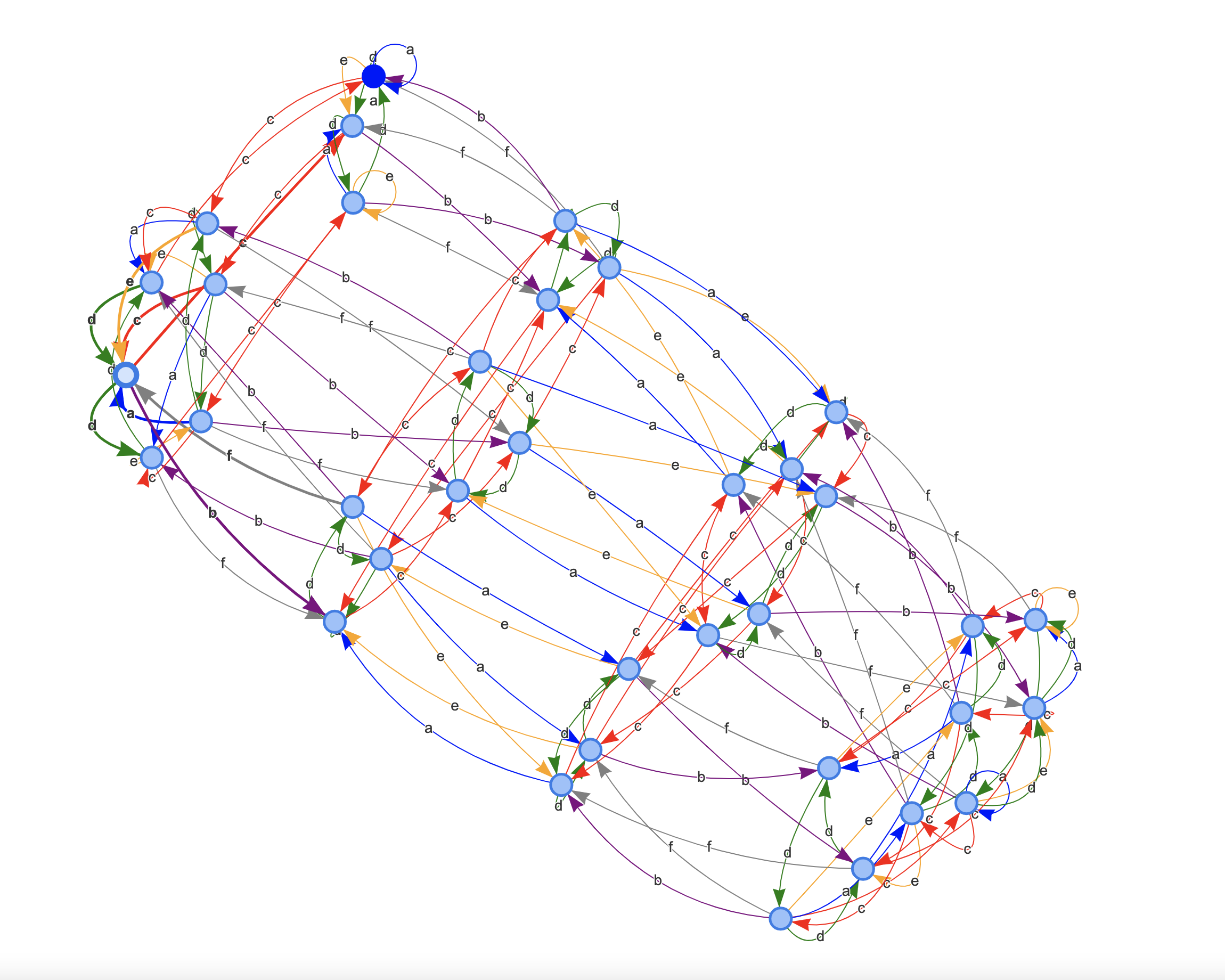}
    \caption{Cayley graph of $Q_a$ when $m = n =3$ and $k=4$.}
    \label{F:a_component}
\end{figure}

We will denote the elements of $Q_a$ by $x_{p, q, r}$, where $p, q, r \in \Z$. We let $x_{0,0,0} = a$, and define:
$$x_{p,q,r} = \left\{\begin{matrix} a^{(ba)^t c^q d^r}, &\text{ if }p = 2t \\ a^{(ba)^tb c^q d^r}, &\text{ if }p = 2t+1 \end{matrix} \right.$$
Observe that $x_{p, q+m, r} = x_{p,q,r}$ and $x_{p, q, r+n} = x_{p,q,r}$, so we may assume $0 \leq q \leq m-1$ and $0 \leq r \leq n-1$ (in other words, we interpret these subscripts modulo $m$ and $n$, respectively). To show that these are all the vertices in the Cayley graph, we will show that the action of each of the generators on $x_{p,q,r}$ gives another element $x_{p',q',r'}$.

We first consider the action of $d$ and $c$.  Clearly, $x_{p,q,r}^d = a^{(wc^qd^r) d} = a^{wc^qd^{r+1}} = x_{p,q,r+1}$ (where $w = (ba)^t$ or $(ba)^tb$).  Also, since $x^{dc} = x^{cd}$ for any $x$ (by Lemma \ref{L:rel2}(2)), $x_{p,q,r}^c = a^{(wc^qd^r)c} = a^{wc^{q+1}d^r} = x_{p,q+1,r}$. Similarly, $x_{p,q,r}^{\inv{d}} = x_{p,q,r-1}$ and $x_{p,q,r}^{\inv{c}} = x_{p,q-1,r}$.

Since $a, b, e, f$ all have order 2, the action of each generator and its inverse are the same. However, the action does depend on whether $p$ is odd or even.
\begin{align*}
x_{p,q,r}^a &= \left\{\begin{matrix} a^{(ba)^t c^q d^r a}, &\text{ if }p = 2t \\ a^{(ba)^tb c^q d^r a}, &\text{ if }p = 2t+1 \end{matrix} \right. \\
&= \left\{\begin{matrix} a^{(ba)^t a \inv{c}^q \inv{d}^r}, &\text{ if }p = 2t \\ a^{(ba)^tb a \inv{c}^q \inv{d}^r}, &\text{ if }p = 2t+1 \end{matrix} \right. \text{ (by Lemma \ref{L:rel2}(3))}\\
&= \left\{\begin{matrix} a^{(ba)^{t-1}b c^{-q} d^{-r}}, &\text{ if }p = 2t \\ a^{(ba)^{t+1} c^{-q} d^{-r}}, &\text{ if }p = 2t+1 \end{matrix} \right. \\
&= \left\{\begin{matrix} x_{p-1,-q,-r}, &\text{ if }p = 2t \\ x_{p+1, -q, -r}, &\text{ if }p = 2t+1 \end{matrix} \right. 
\end{align*}
\begin{align*}
x_{p,q,r}^b &= \left\{\begin{matrix} a^{(ba)^t c^q d^r b}, &\text{ if }p = 2t \\ a^{(ba)^tb c^q d^r b}, &\text{ if }p = 2t+1 \end{matrix} \right. \\
&= \left\{\begin{matrix} a^{(ba)^t b \inv{c}^q \inv{d}^r}, &\text{ if }p = 2t \\ a^{(ba)^tb b \inv{c}^q \inv{d}^r}, &\text{ if }p = 2t+1 \end{matrix} \right. \text{ (by Lemma \ref{L:rel2}(3))}\\
&= \left\{\begin{matrix} a^{(ba)^t b c^{-q} d^{-r}}, &\text{ if }p = 2t \\ a^{(ba)^t c^{-q} d^{-r}}, &\text{ if }p = 2t+1 \end{matrix} \right. \\
&= \left\{\begin{matrix} x_{p+1,-q,-r}, &\text{ if }p = 2t \\ x_{p-1, -q, -r}, &\text{ if }p = 2t+1 \end{matrix} \right. \\
x_{p,q,r}^e &= x_{p,q,r}^{ad} = \left\{\begin{matrix} x_{p-1,-q,-r+1}, &\text{ if }p = 2t \\ x_{p+1, -q, -r+1}, &\text{ if }p = 2t+1 \end{matrix} \right. \\
x_{p,q,r}^f &= x_{p,q,r}^{bd} = \left\{\begin{matrix} x_{p+1,-q,-r+1}, &\text{ if }p = 2t \\ x_{p-1, -q, -r+1}, &\text{ if }p = 2t+1 \end{matrix} \right.
\end{align*}

We've observed that we may assume $0 \leq q \leq m-1$ and $0 \leq r \leq n-1$. The following lemma shows that we may assume $0 \leq p \leq k-1$. 

\begin{lemma} \label{L:ends}
For any integers $q$ and $r$, \begin{enumerate}[label=(\arabic*)]
	\item $x_{-1,q,r} = x_{0,q,r}$,
	\item if $k$ is even, then $x_{k,q,r} = x_{k-1,q+1,r+1}$, and 
	\item if $k$ is odd, then $x_{k,q,r} = x_{k-1,q-1,r-1}$.
\end{enumerate}
\end{lemma}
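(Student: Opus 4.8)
\noindent\emph{Proof plan.}\ \ Part (1) I would verify directly from the defining formula. Since $-1 = 2(-1)+1$, we have $x_{-1,q,r} = a^{(ba)^{-1}b\,c^q d^r}$, and in the free group on the six generators the word $(ba)^{-1}b$ equals $\bar{a}$; hence $x_{-1,q,r} = a^{\bar{a}\,c^q d^r} = (a^{\bar{a}})^{c^q d^r}$. Axiom A1 (together with A2) gives $a^{\bar{a}} = a$, so $x_{-1,q,r} = a^{c^q d^r} = x_{0,q,r}$.

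For parts (2) and (3) the plan is to first prove the auxiliary identity
$$x^{(ab)^{\lfloor k/2\rfloor}} \;=\; x^{\bar{c}\,\bar{d}\,(ba)^{\lceil k/2\rceil}} \qquad (x \in Q_N(G)),$$
which follows from Lemma~\ref{L:rel2}(1) by writing $(ab)^{\lfloor k/2\rfloor} = (ab)^k (ab)^{-\lceil k/2\rceil}$ and using that $a$ and $b$ have order $2$, so that $(ab)^{-1}$ acts as $ba$. Granting this, each of (2) and (3) is a short rewriting of the word defining $x_{k,q,r}$, using Lemmas~\ref{L:rel1} and~\ref{L:rel2} together with $a^{\bar{a}} = a$, $a^{a}=a$, $x^{a^2}=x$ and $x^{b^2}=x$. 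When $k=2t$, I would rewrite $x_{k,q,r} = a^{(ba)^t c^q d^r}$ as $a^{(ab)^t a\,c^q d^r}$ (using $(ba)^t = \bar{a}(ab)^t a$), apply the auxiliary identity to $a^{(ab)^t}$ to reach $a^{\bar{c}\bar{d}\,(ba)^t a\,c^q d^r}$, cancel the $a^2$ inside $(ba)^t a = (ba)^{t-1}b\,a^2$, and slide $\bar{c}\bar{d}$ rightward past the $(2t-1)$-letter word $(ba)^{t-1}b$ via Lemma~\ref{L:rel2}(3); since $2t-1$ is odd this converts $\bar{c}\bar{d}$ into $cd$, giving $a^{(ba)^{t-1}b\,c^{q+1}d^{r+1}} = x_{k-1,q+1,r+1}$. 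When $k=2t+1$, I would rewrite $x_{k,q,r} = a^{(ba)^t b\,c^q d^r}$ as $(a^b)^{(ab)^t c^q d^r}$ (using $(ba)^t b = b(ab)^t$), apply the auxiliary identity to $(a^b)^{(ab)^t}$ to reach $a^{b\bar{c}\bar{d}\,(ba)^{t+1}c^q d^r}$, slide $\bar{c}\bar{d}$ past the $(2t+2)$-letter word $(ba)^{t+1}$ — an even number of sign changes, so it stays $\bar{c}\bar{d}$ and produces $c^{q-1}d^{r-1}$ — and then cancel the $b^2$ in $b(ba)^{t+1} = b^2 a\,(ba)^t$ and use $a^a = a$ to land on $a^{(ba)^t c^{q-1}d^{r-1}} = x_{k-1,q-1,r-1}$.

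Essentially everything here is routine word juggling; the one thing to be careful about is the parity bookkeeping. The length of the twist word past which $\bar{c}\bar{d}$ must be slid is odd exactly when $k$ is even, and this is precisely what switches the shift in the last two coordinates from $+1$ (part (2)) to $-1$ (part (3)); relatedly, the auxiliary identity must be applied with the genuine exponent $k$ in Lemma~\ref{L:rel2}(1), not with $2\lfloor k/2\rfloor$. I expect this parity and indexing check to be the only real obstacle, the rest being mechanical.
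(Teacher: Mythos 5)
Your argument is correct and rests on the same ingredients as the paper's own proof: Lemma~\ref{L:rel2}(1) to trade powers of $ab$ for $\bar{c}\bar{d}$, the commutation $x^{cd}=x^{dc}$, and the involution cancellations $a^{a}=a$, $x^{a^2}=x^{b^2}=x$. The only mechanical difference is that the paper inserts the trivially-acting word $(ab)^k dc$ (resp.\ $(ba)^k\bar{c}\bar{d}$) immediately to the left of $c^q d^r$, so the $c$'s and $d$'s land exactly where they are needed and the rightward slide with its parity count in your version is avoided.
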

\begin{proof}
To prove (1), observe that
$$x_{-1,q,r} = x_{2(-1)+1,q,r} = a^{(ba)^{-1}b c^q d^r} = a^{abb c^q d^r} = a^{c^qd^r} = x_{0,q,r}$$
If $k = 2t$ is even, then (using Lemma \ref{L:rel2}(1)):
\begin{align*}
x_{k, q, r} &= a^{(ba)^t c^q d^r} = a^{(ba)^t [(ab)^k dc] c^q d^r} = a^{(ab)^t c^{q+1} d^{r+1}} \\
&= a^{(ba)^{t-1}b c^{q+1} d^{r+1}} = x_{k-1, q+1, r+1}.
\end{align*}
Similarly, if $k = 2t+1$ is odd, then:
\begin{align*}
x_{k, q, r} &= a^{(ba)^t b c^q d^r} = a^{(ba)^t b [(ba)^k \inv{c}\inv{d}] c^q d^r} = a^{(ab)^t a c^{q-1} d^{r-1}} \\
&= a^{(ba)^t c^{q-1} d^{r-1}} = x_{k-1, q-1, r-1}.
\end{align*}
\end{proof}
Since the actions of the generators $a, b, e, f$ only increment $p$ by $\pm 1$, starting from $a = x_{0,0,0}$, we can't get values of $p$ less than 0 or greater than $k-1$.  So we may assume $0 \leq p \leq k-1$.

Since $0 \leq p \leq k-1$, $0 \leq q \leq m-1$ and $0 \leq r \leq n-1$, we have $kmn$ vertices $x_{p,q,r}$. Now we need to check that the quandle relations are satisfied at every vertex with no further collapsing. It is easy to check from the actions described above that:
$$x_{p,q,r}^{a^2} = x_{p,q,r}^{b^2} = x_{p,q,r}^{c^m} = x_{p,q,r}^{d^n} = x_{p,q,r}^{e^2} = x_{p,q,r}^{f^2} = x.$$
Now we will check the next two relations in $Q_N(G)$:
\begin{align*}
x_{p,q,r}^{dea} &= x_{p,q,r+1}^{ea} = \left\{\begin{matrix} x_{p-1,-q,-r}^a, &\text{ if }p = 2t \\ x_{p+1, -q, -r}^a, &\text{ if }p = 2t+1 \end{matrix} \right. \\
&= \left\{\begin{matrix} x_{(p-1)+1,-(-q),-(-r)}, &\text{ if }p = 2t \\ x_{(p+1)-1, -(-q), -(-r)}, &\text{ if }p = 2t+1 \end{matrix} \right. \\
&= x_{p,q,r}
\end{align*}
\begin{align*}
x_{p,q,r}^{bdf} &= \left\{\begin{matrix} x_{p+1,-q,-r}^{df}, &\text{ if }p = 2t \\ x_{p-1, -q, -r}^{df}, &\text{ if }p = 2t+1 \end{matrix} \right. \\
&= \left\{\begin{matrix} x_{p+1,-q,-r+1}^f, &\text{ if }p = 2t \\ x_{p-1, -q, -r+1}^f, &\text{ if }p = 2t+1 \end{matrix} \right. \\
&= \left\{\begin{matrix} x_{(p+1)-1,-(-q),-(-r+1)+1}, &\text{ if }p = 2t \\ x_{(p-1)+1, -(-q), -(-r+1)+1}, &\text{ if }p = 2t+1 \end{matrix} \right. \\
&= x_{p,q,r}.
\end{align*}
For the final two relations, it will be convenient to consider the cases when $p$ and $k$ are even or odd separately. We also observe that
$$x_{p,q,r}^{ab} = \left\{\begin{matrix} x_{p-2,q,r}, &\text{ if }p = 2t \\ x_{p+2,q,r}, &\text{ if }p = 2t+1 \end{matrix} \right.$$
$$x_{p,q,r}^{ba} = \left\{\begin{matrix} x_{p+2,q,r}, &\text{ if }p = 2t \\ x_{p-2,q,r}, &\text{ if }p = 2t+1 \end{matrix} \right.$$
We first consider the case when $p = 2t$ and $k = 2s$, with $0 \leq p \leq k-1$.  Then by tracing the action of the generators we compute:
\begin{align*}
x_{p,q,r}^{c(ab)^k ae} &= x_{p,q+1,r}^{(ab)^k ae} \\
&= x_{p,q+1,r}^{(ab)^t a (ba)^s b (ab)^{s-t-1} ae} \\
&= x_{0, q+1, r}^{a (ba)^s b (ab)^{s-t-1} ae} = x_{-1,-q-1,-r}^{(ba)^s b (ab)^{s-t-1} ae} \\
&= x_{0, -q-1, -r}^{(ba)^s b (ab)^{s-t-1} ae} \text{ (by Lemma \ref{L:ends}(1))}\\
&= x_{k, -q-1, -r}^{b (ab)^{s-t-1} ae} = x_{k-1, -q, -r+1}^{b (ab)^{s-t-1} ae} \text{ (by Lemma \ref{L:ends}(2))} \\
&= x_{k-2,q,r-1}^{(ab)^{s-t-1} ae} = x_{(k-2)-2(s-t-1), q, r-1}^{ae} = x_{p, q, r-1}^{ae} \\
&= x_{p-1, -q, -r+1}^e = x_{p, q,r}
\end{align*}
and
\begin{align*}
x_{p,q,r}^{fc (ab)^{k-1} a} &= x_{p+1,-q,-r+1}^{c (ab)^{k-1} a} = x_{p+1,-q+1,-r+1}^{(ab)^{k-1} a} \\
&= x_{p+1,-q+1,-r+1}^{(ab)^{s-t-1} a (ba)^s b (ab)^{t-1} a} \\
&= x_{(p+1)+(k-p-2),-q+1,-r+1}^{a (ba)^s b (ab)^{t-1} a} = x_{k-1,-q+1,-r+1}^{a (ba)^s b (ab)^{t-1} a} \\
&= x_{k,q-1,r-1}^{(ba)^s b (ab)^{t-1} a} = x_{k-1, q,r}^{(ba)^s b (ab)^{t-1} a} \text{ (by Lemma \ref{L:ends}(2))} \\
&= x_{-1, q,r}^{b (ab)^{t-1} a} = x_{0,q,r}^{b (ab)^{t-1} a} \text{ (by Lemma \ref{L:ends}(1))}\\
&= x_{1,-q,-r}^{(ab)^{t-1} a} = x_{1+(p-2), -q,-r}^a = x_{p-1, -q,-r}^a \\
&= x_{p, q, r}.
\end{align*}
The proofs for the other combinations of the parities of $p$ and $k$ are similar. So there is no further collapsing, and the elements of $Q_a$ are exactly the elements $x_{p,q,r}$ for $0 \leq p \leq k-1$, $0 \leq q \leq m-1$ and $0 \leq r \leq n-1$.  So $\abs{Q_a} = kmn$.

%%%%%%%%%%%%%%%%%%%%%%%
%  SSS:Qd
%%%%%%%%%%%%%%%%%%%%%%%
\subsection{The component $Q_d$ of $Q_N(G)$} \label{SSS:Qd}

Now we will describe the Cayley graph for the component $Q_d$ of $Q_N(G)$, and prove that it has $2km$ elements.  Figure~\ref{F:d_component} shows the Cayley graph for $Q_d$ in the case when $m = n = 3$ and $k = 4$. In general, the Cayley graph of $Q_d$ consists of $2k$ $m$-cycles arranged in a loop. The $m$-cycles are made up of edges labeled $c$, while adjoining cycles in the loop are connected by edges labeled $a, b, e, f$. The edges labeled $d$ are small loops at each vertex of the Cayley graph.

\begin{figure}[ht]
    \centering
    \includegraphics[width=0.7\textwidth]{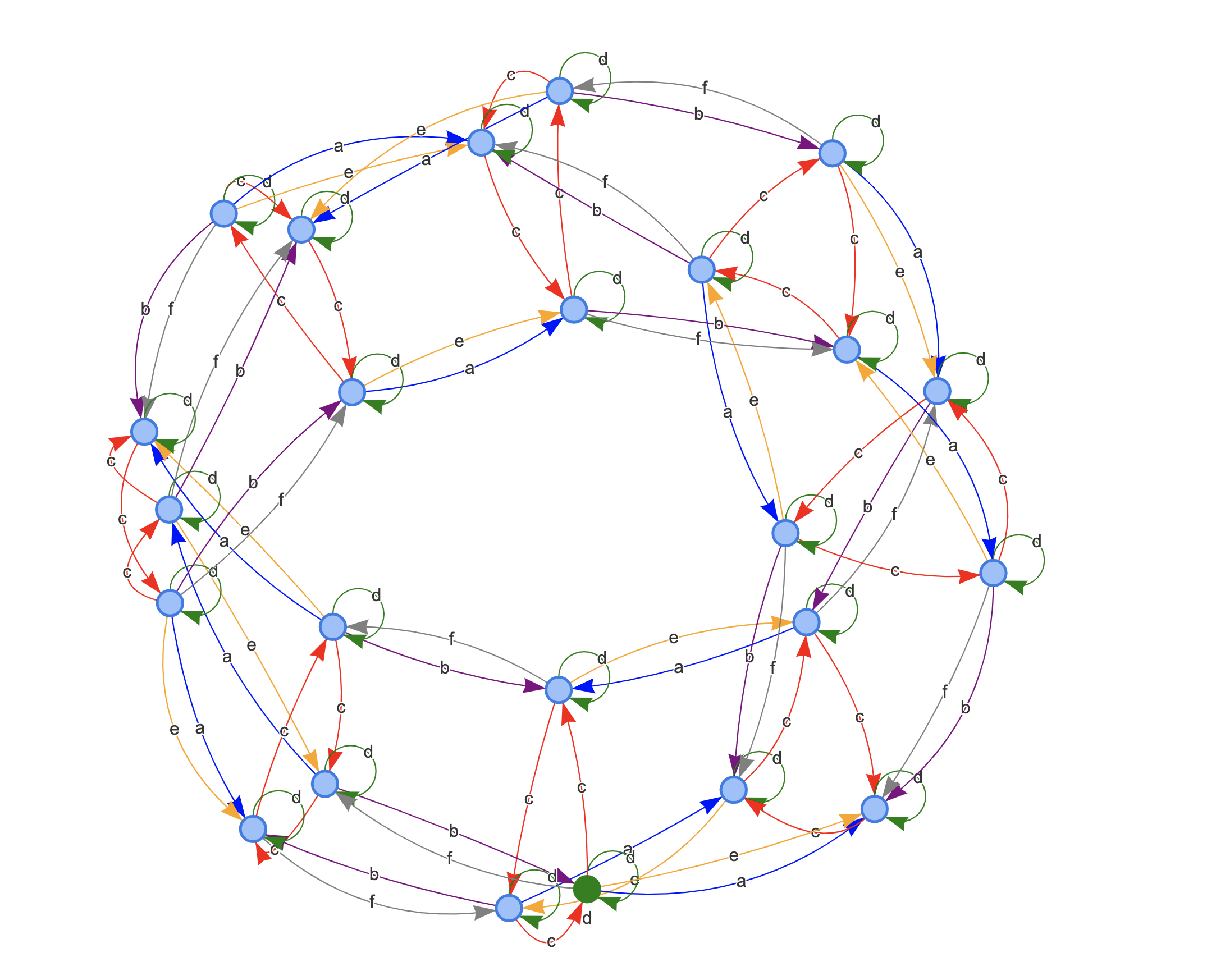}
    \caption{Cayley graph of $Q_d$ when $m=3$, $n=3$ and $k=4$.}
    \label{F:d_component}
\end{figure}

We will denote the elements of $Q_d$ by $y_{p,q}$, where $y_{0,0} = d$ and (for $p, q \in \Z$)
$$y_{p,q} = \left\{\begin{matrix} d^{(ab)^t c^q}, &\text{ if }p = 2t \\ d^{(ab)^ta c^q}, &\text{ if }p = 2t+1 \end{matrix} \right.$$
Since $y_{p, q+m} = y_{p,q}$, we may assume $0 \leq q \leq m-1$. Now we need to determine the action of each generator on $y_{p,q}$. The action of $c$ is easy to see, moving around the $m$-cycle:
$$y_{p,q}^c = d^{wc^q c} = d^{wc^{q+1}} = y_{p,q+1}.$$
The action of $d$ takes every element of $Q_d$ back to itself, giving the loops in Figure \ref{F:d_component}:
\begin{align*}
y_{p,q}^d &= \left\{\begin{matrix} d^{(ab)^t c^q d}, &\text{ if }p = 2t \\ d^{(ab)^ta c^q d}, &\text{ if }p = 2t+1 \end{matrix} \right. \\
&= \left\{\begin{matrix} d^{(ab)^t d c^q}, &\text{ if }p = 2t \\ d^{(ab)^ta d c^q}, &\text{ if }p = 2t+1 \end{matrix} \right. \text{ (by Lemma \ref{L:rel2}(2))} \\
&= \left\{\begin{matrix} d^{d (ab)^t c^q}, &\text{ if }p = 2t \\ d^{\inv{d} (ab)^ta c^q}, &\text{ if }p = 2t+1 \end{matrix} \right. \text{ (by Lemma \ref{L:rel1})}\\
&= \left\{\begin{matrix} d^{(ab)^t c^q}, &\text{ if }p = 2t \\ d^{(ab)^ta c^q}, &\text{ if }p = 2t+1 \end{matrix} \right. \\
&= y_{p,q}.
\end{align*}
The actions of $a, b, e, f$ move between the $m$-cycles:
\begin{align*}
y_{p,q}^a &= \left\{\begin{matrix} d^{(ab)^t c^q a}, &\text{ if }p = 2t \\ d^{(ab)^ta c^q a}, &\text{ if }p = 2t+1 \end{matrix} \right. \\
&= \left\{\begin{matrix} d^{(ab)^t a \inv{c}^q}, &\text{ if }p = 2t \\ d^{(ab)^ta a \inv{c}^q}, &\text{ if }p = 2t+1 \end{matrix} \right. \text{ (by Lemma \ref{L:rel2}(3))}\\
&= \left\{\begin{matrix} d^{(ab)^t a \inv{c}^q}, &\text{ if }p = 2t \\ d^{(ab)^t \inv{c}^q}, &\text{ if }p = 2t+1 \end{matrix} \right. \\
&= \left\{\begin{matrix} y_{p+1,-q}, &\text{ if }p = 2t \\ y_{p-1,-q}, &\text{ if }p = 2t+1 \end{matrix} \right.
\end{align*}
\begin{align*}
y_{p,q}^b &= \left\{\begin{matrix} d^{(ab)^t c^q b}, &\text{ if }p = 2t \\ d^{(ab)^ta c^q b}, &\text{ if }p = 2t+1 \end{matrix} \right. \\
&= \left\{\begin{matrix} d^{(ab)^t b \inv{c}^q}, &\text{ if }p = 2t \\ d^{(ab)^ta b \inv{c}^q}, &\text{ if }p = 2t+1 \end{matrix} \right. \text{ (by Lemma \ref{L:rel2}(3))}\\
&= \left\{\begin{matrix} d^{(ab)^{t-1} a \inv{c}^q}, &\text{ if }p = 2t \\ d^{(ab)^{t+1} \inv{c}^q}, &\text{ if }p = 2t+1 \end{matrix} \right. \\
&= \left\{\begin{matrix} y_{p-1,-q}, &\text{ if }p = 2t \\ y_{p+1,-q}, &\text{ if }p = 2t+1 \end{matrix} \right.
\end{align*}
To determine the action of $e$ and $f$, we use the relations $x^{dea} = x$ and $x^{bdf} = x$.  Since $y_{p,q}^d = y_{p,q}$, we find that $e$ and $f$ have the same actions as $a$ and $b$.
$$y_{p,q}^e = y_{p,q}^{ad} = y_{p,q}^a$$
$$y_{p,q}^f = y_{p,q}^{bd} = y_{p,q}^b$$
To put bounds on $p$, observe that 
\begin{align*}
y_{p,q}^{ab} &= \left\{\begin{matrix} y_{p+1,-q}^b, &\text{ if }p = 2t \\ y_{p-1,-q}^b, &\text{ if }p = 2t+1 \end{matrix} \right. \\
&= \left\{\begin{matrix} y_{p+2,q}, &\text{ if }p = 2t \\ y_{p-2,q}, &\text{ if }p = 2t+1 \end{matrix} \right.
\end{align*}
In particular, this means that 
$$y_{p,q}^{(ab)^k} = \left\{\begin{matrix} y_{p+2k,q}, &\text{ if }p = 2t \\ y_{p-2k,q}, &\text{ if }p = 2t+1 \end{matrix} \right.$$
But, by Lemma \ref{L:rel2}(1), $y_{p,q}^{(ab)^k} = y_{p,q}^{\inv{c}\inv{d}} = y_{p,q-1}^{\inv{d}} = y_{p,q-1}$.  Therefore, 
$$y_{p+2k, q} = \left\{\begin{matrix} y_{p, q-1}, &\text{ if }p = 2t \\ y_{p, q+1}, &\text{ if }p = 2t+1 \end{matrix} \right.$$
Hence, we may assume that $0 \leq p \leq 2k-1$.

It is straightforward to check that all the relations now hold at every vertex of $Q_d$, so the Cayley graph is complete, and $\abs{Q_d} = 2km$.

%%%%%%%%%%%%%%%%%%%%%%%
%  SSS:Qbcef
%%%%%%%%%%%%%%%%%%%%%%%
\subsection{The components $Q_b$, $Q_c$, $Q_e$ and $Q_f$ of $Q_N(G)$} \label{SSS:Qbcef}

The components $Q_b$, $Q_e$ and $Q_f$, like $Q_a$, have $kmn$ elements, while $Q_c$ has $2kn$ elements.  The result for $Q_b$, $Q_e$ and $Q_f$ can be proved by arguments similar to those in section \ref{SSS:Qa}; however, here we will give a more topological argument. Consider the isotopy shown in Figure \ref{F:twist}, where we perform a flype on the bottom portion of $G(k,m,n)$. Since the two new crossings have opposite signs, the number of positive half-twists is still $k$. So this isotopy induces an automorphism of $Q_N(G)$ that interchanges $a$ and $b$, interchanges $e$ and $f$, and fixes $c$ and $d$, but in the Cayley graph reverses the orientation of the edges labeled $c$ and $d$. Hence the Cayley graphs for $Q_a$ and $Q_b$ are isomorphic, as are the Cayley graphs for $Q_e$ and $Q_f$.

\begin{figure}[htbp]
$$\includegraphics[width=3.5in]{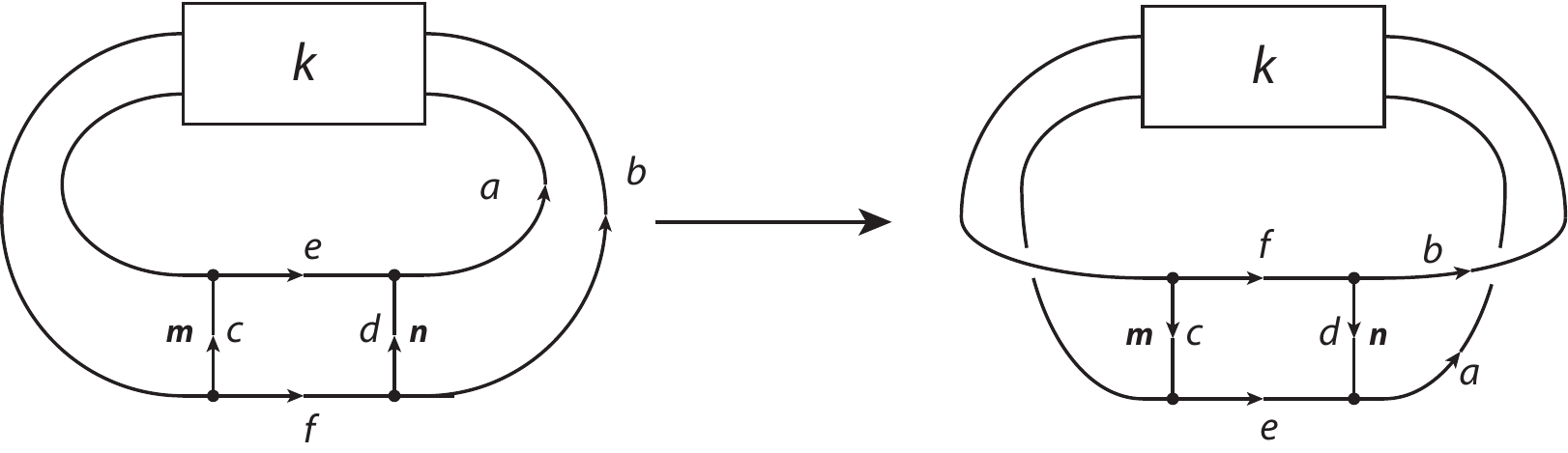}$$
\caption{Performing a flype on $G(k,m,n)$.}
\label{F:twist}
\end{figure}

We also consider the isotopy shown in Figure \ref{F:slide}.  Here we first slide the edge labeled $c$ through the block of $k$ half-twists (if $k$ is even, the orientation is the same afterwards; if $k$ is odd it is reversed), and then rotate the graph $180^\circ$ around a vertical axis. The induced automorphism on $Q_N(G)$ interchanges $a$ and $e$, interchanges $b$ and $f$, and fixes $c$ and $d$ (though it may reverse the orientation of the edges labeled $c$ in the Cayley graph). So the Cayley graphs for $Q_a$ and $Q_e$ are isomorphic, as are the Cayley graphs for $Q_b$ and $Q_f$.

\begin{figure}[htbp]
$$\includegraphics[width=5in]{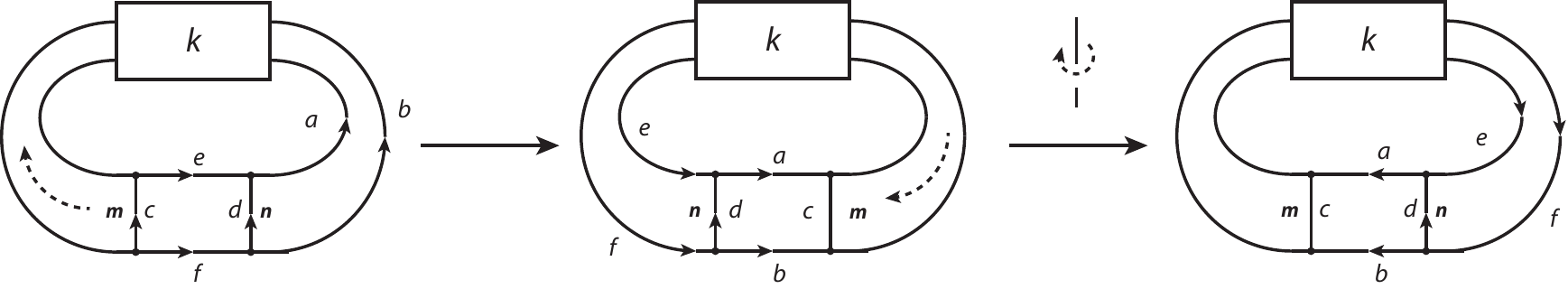}$$
\caption{Performing a flype on $G(k,m,n)$.}
\label{F:slide}
\end{figure}

We conclude that $Q_a$, $Q_b$, $Q_e$ and $Q_f$ all have isomorphic Cayley graphs, and hence all have $kmn$ elements. The Cayley graph for $Q_c$ can be computed similarly to that for $Q_d$ (as is clear, in particular, from the middle diagram in Figure \ref{F:slide}), so $\abs{Q_c} = 2kn$.

Combining all of these results gives us $\abs{Q_N(G)} = 4kmn + 2km + 2kn$, proving Theorem \ref{T:Gkmn}.  As a corollary, we consider the quandle $Q_{(2,2,m)}(G(k,m))$.  In this case, by Lemmas \ref{L:delete} and \ref{L:vertex}, we let $n = 1$, and delete the components corresponding to the generators $d, e, f$. This gives us $\vert Q_{(2,2,m)}(G(k,m))\vert = 2km + 2k$, proving Corollary \ref{C:Gkm}.

%%%%%%%%%%%%%%%%%%%%%%%
%  S:questions
%%%%%%%%%%%%%%%%%%%%%%%
\section{Open questions} \label{S:questions}

There are many open questions that can be investigated further. In section \ref{SS:properties}, we investigated how a few very simple graph operations affected the $N$-quandle; it is natural to ask how other graph operations affect the fundamental quandle.

\begin{question}
How do graph operations such as edge deletion, edge contraction, etc., affect the $N$-quandle of the graph? How are the $N$-quandles of a minor of a spatial graph related to the $N$-quandle of the larger graph?
\end{question}

In this paper, we only considered one direction of the Main Conjecture, showing that the spatial graphs appearing in Dunbar's classification of orbifolds have finite $N$-quandles.  There is still much work to be done here, beginning with the potential counterexample.

\begin{question}
Is $Q_{(3,3,2,2,2,2)}(\text{knotted }K_4)$ finite?
\end{question}

We can also investigate the families of links and graphs in Figure \ref{F:linktable}.  This will require dealing with the general rational tangles, as was done in \cite{HS1} and \cite{Me}, but with the added complexity of a strut inserted into the tangle.

\begin{question}
Do the families of graphs in Figure \ref{F:linktable} all have finite $N$-quandles?
\end{question}

And, of course, this still leaves open the other direction of the Main Conjecture:

\begin{question}
Are there other graphs which have finite $N$-quandles, which do not satisfy the criterion of the Main Conjecture?
\end{question}

This seems like a much harder problem, since the proof for $n$-quandles of links in \cite{HS2} relies on constructions such as branched covering spaces that do not easily extend to graphs.

\bibliography{Nquandle.bib}

\end{document}